%BeginFileInfo
%%Publisher=ARXIV
%%Project=AOP
%%Manuscript=AOP938
%%Stage=
%%TID=linak
%%Format=latex
%%Distribution=arXiv
%%Destination=DVI
%%PDF type=
%%DVI.Maker=arXiv_tex_dvi
%EndFileInfo
%
% Institute of Mathematical Statistics (IMI)
% Journal "The Annals of Probabability"

%secthm,,secfloat,nameyear,number,noautosecdot
\input ./style/arxiv-general.cfg
\documentclass[aop,MSNbibl,seceqn,dvips]{arximspdf}
\makeatletter
   \@ifpackageloaded{graphicx}{}{\usepackage{graphicx}}
\makeatother
\usepackage{tikz}

% settings
%

% article settings
\doi{10.1214/14-AOP938} %kopijuoti is PTS
\volume{43}
\issue{5}
\pubyear{2015}
\firstpage{2374}
\lastpage{2404}
\docsubty{FLA}

\makeatletter
%\innerskeltrue
\def\adjustlimits{}
\newcommand{\eqref}[1]{(\ref{#1})}
\newcommand{\Hbb}{\mathbb{H}}
\newcommand{\Sbb}{\mathbb{S}}
\newcommand{\boD}{\mathcal{D}}
\newcommand{\boH}{\mathcal{H}}
\newcommand{\N}{\mathbb{N}}
\newcommand{\R}{\mathbb{R}}

\newcommand{\st}{\dvtx}
\newcommand{\sumstar}{{\sum}^{'}}

\newcommand{\Leb}{\operatorname{Leb}}
\newcommand{\Card}{\operatorname{Card}}

\renewcommand{\epsilon}{\varepsilon}
\renewcommand{\phi}{\varphi}

\newtheorem{thmm}{Theorem}[section]
\newtheorem{prop}[thmm]{Proposition}
\newproclaim{definition}[thmm]{Definition}
\newtheorem{lem}[thmm]{Lemma}

\newtheorem{step}{Step}

\newproclaim{ex}{Example}
\newproclaim{rmk}[thmm]{Remark}
\makeatother

\begin{document}
\begin{frontmatter}

%\dochead{}
\title{Martin boundary of random walks with unbounded jumps in
hyperbolic groups}
\runtitle{Martin boundary of random walks with unbounded jumps}

\begin{aug}
\author[A]{\fnms{S\'ebastien}~\snm{Gou\"ezel}\corref{}\ead[label=e1]{sebastien.gouezel@univ-rennes1.fr}}
%\address{\printead{e1}}
\runauthor{S. Gou\"ezel}
\affiliation{IRMAR, Universit\'e de Rennes 1}
\address[A]{IRMAR\\
CNRS UMR 6625\\
Universit\'e de Rennes 1\\
35042 Rennes\\
France\\
\printead{e1}}
%\author{\fnms{}~\snm{}}
%\and
%\author{\fnms{}~\snm{}}
%\runauthor{}
%\affiliation{}
%\dedicated{}
%\address{} %adresu isvedimo komanda gale!
%\address{}
\end{aug}

% HISTORY:
\received{\smonth{9} \syear{2013}}
%\revised{\smonth{} \syear{}}
%\accepted{\smonth{} \syear{}}

% ABSTRACT
%
\begin{abstract}
Given a probability measure on a finitely generated group, its Martin
boundary is a natural way to compactify the group using the Green
function of the corresponding random walk. For finitely supported
measures in hyperbolic groups, it is known since the work of Ancona
and Gou\"ezel--Lalley that the Martin boundary coincides with the
geometric boundary. The goal of this paper is to weaken the finite
support assumption. We first show that, in any nonamenable group,
there exist probability measures with exponential tails giving rise
to pathological Martin boundaries. Then, for probability measures
with superexponential tails in hyperbolic groups, we show that the
Martin boundary coincides with the geometric boundary by extending
Ancona's inequalities. We also deduce asymptotics of transition
probabilities for symmetric measures with superexponential tails.
\end{abstract}

% KEYWORDS
% Pirmas kwd is didziosios raides
%
\begin{keyword}[class=AMS]
\kwd{31C35}
\kwd{60J50}
\kwd{60B99}
\end{keyword}
\begin{keyword}
\kwd{Random walk}
\kwd{hyperbolic group}
\kwd{Martin boundary}
\kwd{Gromov boundary}
\kwd{infinite range}
\kwd{local limit theorem}
\end{keyword}
%
%\begin{keyword}[class=AMS]
%\kwd[Primary ]{}
%\kwd{}
%\kwd[; secondary ]{}
%\end{keyword}
%\begin{keyword}
%\kwd{}
%\end{keyword}
\end{frontmatter}

%s1 #&#
\section{Introduction}\label{sec1}

Consider a probability measure $\mu$ on a finitely generated group
$\Gamma$, whose support generates $\Gamma$ as a semigroup (we say
that $\mu$ is \emph{admissible}). The Green function associated to
$\mu$ is $G_{\mu}(x,y)=G(x,y) = \sum_{n=0}^\infty\mu^n(x^{-1}y)$.
The Green function is defined so that the random walk with transition
probabilities $p(a,b) = \mu(a^{-1}b)$ starting from $x$ spends an
average time $G(x,y)$ at $y$. We will always assume that this sum is
finite (i.e., the random walk is transient). The function $G$
contains a lot of information about the transition probabilities and
the asymptotic properties of the random walk. Moreover, it is at the
heart of the potential theory of $\mu$, making it possible to
describe all positive harmonic functions through the notion of
\emph{Martin boundary}.

The Martin boundary $\partial_\mu\Gamma$ is defined as follows: a
sequence of points $y_n\in\Gamma$ going to infinity converges in
$\Gamma\cup\partial_\mu\Gamma$ if and only if, for all $z$, the
sequence $K_{y_n}(z)=G(z,y_n)/G(e,y_n)$ converges, where $e$ denotes
the identity of the group. One can associate to any $\xi\in
\partial_\mu\Gamma$ the corresponding Martin kernel $K_\xi(z) =
\lim
K_{y_n}(z)$. This function is superharmonic (i.e., if $P_\mu$ denotes
the Markov operator associated to $\mu$, then $P_\mu K_\xi\leq
K_\xi$), and any positive superharmonic function on $\Gamma$ can be
decomposed as an integral of the kernels $K_\xi$ with respect to some
finite measure on $\Gamma\cup
\partial_\mu\Gamma$ (the decomposition is unique if one requires that
the measure is supported on $\Gamma$ and on the minimal part of the
Martin boundary, made of those $\xi$ whose kernel $K_\xi$ is harmonic
and minimal among positive harmonic functions). See, for
instance, \cite{dynkin_martin,sawyer_martin,woess}.

Describing concretely the Martin boundary in specific examples is
difficult, especially in nonamenable situations. A landmark result
in this direction is a theorem by Ancona~\cite{ancona2} showing that,
for finitely supported probability measures in (nonelementary)
hyperbolic groups, the Martin boundary coincides with the geometric
boundary of the group. His result is not restricted to probability
measures: the Green function and the Martin boundary can be defined
for any finite measure $\mu$, and Ancona's result is true for any
measure $\mu$ such that $r\mu$ has a finite Green function for some
$r>1$ [we will say that such a $\mu$ has the property $\mathrm
{Anc}_*$, since
this property is called $(*)$ in Ancona's paper]. Ancona's proof is
based on an inequality saying that, in hyperbolic groups, the Green
function of a measure with finite support and property $\mathrm{Anc}_*$ is
essentially multiplicative along geodesics: there exists a constant
$C$ such that, for any $x,y,z$ on a geodesic of the group (in this
order), one has
%
%e1.1 #&#
\begin{equation}
\label{eq:ancona} C^{-1} G(x,y)G(y,z) \leq G(x,z) \leq CG(x,y)G(y,z).
\end{equation}
While the first inequality is true for any random walk in any group,
the second one is highly nontrivial. It is used by Ancona to show
that the Martin boundary coincides with the geometric boundary. It
also plays an important role in the article~\cite{BHM_2} by Blach\`ere,
Ha\"{\i}ssinsky and Mathieu: they prove that this inequality is necessary
and sufficient so that a natural distance associated to the random
walk, the \emph{Green distance}, is hyperbolic (and they prove
several properties of the harmonic measure at infinity under this
condition). It is also instrumental in the
articles~\cite{gouezel_lalley,gouezel_higherancona} by Gou\"ezel and
Lalley, where the asymptotics of transition probabilities in
hyperbolic groups are determined (note that the authors need to
extend Ancona inequalities to some measures that do not satisfy
$\mathrm{Anc}_*$). All those results rely on the finiteness of the
support of the
measure $\mu$.

Our goal in this article is to see to what extent the previous
results can be extended to measures with infinite support. The tails
of the measure, that is, the speed at which $\mu(B(e,n)^c)$ tends to $0$
[where $B(e,n)^c$ denotes the complement of the ball centered at $e$
of radius $n$, for some word distance in the group] will play an
important role in the results. We will say that a measure has
exponential tails if there exists $K>1$ such that, for large enough
$n$, $\mu(B(e,n)^c) \leq K^{-n}$. We will say that $\mu$ has
superexponential tails if this condition is true for all $K>1$.
Equivalently, $\mu$ has exponential tails if, for some $\delta>0$,
the sum $\sum_{g\in\Gamma} e^{\delta\vert g\vert}\mu(g)$ is finite
(where $\vert g\vert$ is the distance from $e$ to $g$ in a word metric),
and $\mu$ has superexponential tails if this sum is finite for all
$\delta>0$.

Our first result shows that one cannot expect a reasonable
description of the Martin boundary if one only demands an exponential
decay of the tails:

%th1.1 #&#
\begin{thmm}
\label{thmm:pathological}
Consider a nonamenable finitely generated group $\Gamma$, and a
sequence $y_n$ going to infinity in $\Gamma$. There exists an
admissible symmetric probability measure $\mu$ on $\Gamma$, with
exponential tails, such that $y_n$ does not converge in the Martin
boundary $\partial_\mu\Gamma$.
\end{thmm}

This implies in particular that there exist uncountably many possible
different Martin boundaries for measures with exponential tails, by a
standard diagonal argument.

If the tails have a better behavior (i.e., if they are
superexponential), we can extend Ancona's results:

%th1.2 #&#
\begin{thmm}
\label{thmm:old_ancona}
In a nonelementary hyperbolic group $\Gamma$, consider an admissible
measure satisfying $\mathrm{Anc}_*$, with superexponential tails. Then it
satisfies Ancona inequalities~\eqref{eq:ancona}. In particular, its
Martin boundary coincides with the geometric boundary of the group.
\end{thmm}

It follows that all the results of~\cite{BHM_2} describing the
geometry of the harmonic measure (and in particular its pointwise
dimension), originally obtained for finitely supported measures,
still hold for measures with superexponential tails.

As we explained before, the results of~\cite
{gouezel_lalley,gouezel_higherancona} require Ancona inequalities for
measures that
do not satisfy $\mathrm{Anc}_*$. We extend their results to measures with
superexponential tails.

%th1.3 #&#
\begin{thmm}
\label{thmm:new_ancona}
In a nonelementary hyperbolic group $\Gamma$, consider an admissible
measure $\mu$ with superexponential tails and finite Green function.
Assume that one of the following conditions is satisfied:
\begin{enumerate}
\item The measure $\mu$ is symmetric.
\item The group $\Gamma$ is a free group on finitely many
generators.
\item The group $\Gamma$ is a cocompact lattice of ${\mathrm
{PSL}(2,\R)}$.
\end{enumerate}
Then $\mu$ satisfies Ancona inequalities~\eqref{eq:ancona}. In
particular, its Martin boundary coincides with the geometric boundary
of the group.
\end{thmm}

It is likely that the above conditions ($\mu$ symmetric or $\Gamma$
planar) are not necessary for this theorem, but this is unknown even
in the case of a finitely supported $\mu$. The above conditions are
precisely those that are used in~\cite
{gouezel_lalley,gouezel_higherancona} to obtain (for finitely supported
measures)
Ancona inequalities and a description of the Martin boundary.

The motivation for the results of~\cite
{gouezel_lalley,gouezel_higherancona} was to obtain asymptotics of transition
probabilities for random walks. We deduce the corresponding statement
in our setting.

%th1.4 #&#
\begin{thmm}
\label{thmm:local}
In a nonelementary hyperbolic group $\Gamma$, consider a symmetric
admissible probability measure $\mu$ with superexponential tails.
Denote by $R>1$ the inverse of the spectral radius of the
corresponding random walk. For any $x, y\in\Gamma$, there exists
$C(x,y)>0$ such that the transition probabilities $p^n(x,y)$ of the
random walk at time $n$ satisfy
\[
p^n(x,y) \sim C(x,y)R^{-n} n^{-3/2}
\]
if the walk is aperiodic. If the walk is periodic, this asymptotics
holds for even (resp., odd) $n$ if the distance from $x$ to $y$ is
even (resp., odd).
\end{thmm}

This result is new even for random walks on free groups. Note that,
even in the finitely supported case, the proof requires the symmetry
of the measure since the very end of the argument relies on spectral
properties of the Markov operator.

The paper is organized as follows. In
Section~\ref{sec:green_function}, we recall basic properties of the
Green function. Section~\ref{sec:pathological} is devoted to the
construction of pathological Green functions for measures with
exponential tails, proving in particular
Theorem~\ref{thmm:pathological}. The main idea of the construction is
that, even with exponential tails, one can ensure that the most
likely way to reach some point is by doing a direct jump. This makes
it possible to prescribe very precisely the asymptotics of the Green
function. Finally, Section~\ref{sec:positive} is devoted to the
positive results in hyperbolic groups, for measures with
superexponential tails. Ancona's arguments to get his inequality rely
on a subtle induction that does not seem generalizable to the
infinite support situation. We will rather use a lemma
of~\cite{gouezel_lalley} (see Lemma~\ref{lem:GL} below) showing that
some upper bounds on relative Green functions imply Ancona
inequalities. Such upper bounds are more manageable, and can be
proved for infinitely supported measures as we will show.

%s2 #&#
\section{The Green function}
\label{sec:green_function}

Consider a finite admissible measure $\mu$ on a finitely generated
group $\Gamma$. We will always assume that its Green function
$G(x,y)=\sum\mu^n(x^{-1}y)$ is finite for some $x,y$ (and, therefore,
for all $x,y$ by admissibility). Denote by $P_\mu$ the operator
associated to $\mu$, given by $P_\mu f(x) = \sum\mu(x^{-1}y) f(y)$---when
$\mu$ is a probability measure, this is simply the Markov
operator associated to the corresponding random walk. Even when $\mu$
is not a probability measure, we will use probabilistic notation
such as $p^n(x,y) = \mu^n(x^{-1}y)$, and think of $G(x,y) = \sum
(P_\mu^n \delta_y)(x)$ as an average time spent at $y$ if one starts
from $x$.

The Green function can also be formulated in terms of paths. Let
$\tau=(x, x_1,\ldots,x_{n-1}, y)$ be a path of length $n$ from $x$
to $y$, we define its $\mu$-weight (or simply weight)
$\pi_\mu(\tau)=\pi(\tau)$ by
\[
\pi(\tau) = \prod_{i=0}^{n-1}
p(x_i, x_{i+1}),
\]
where $x_0=x$ and $x_n=y$ by convention, and we write
$p(a,b)=\mu(a^{-1}b)$. We think of $\pi(\tau)$ as the ``probability''
to follow the path $\tau$. By definition, $G(x,y)=\sum\pi(\gamma)$,
where the sum is over all paths from $x$ to $y$.

If $\Omega$ is a subset of $\Gamma$, one defines the restricted Green
function $G(x,y;\Omega)$ as $\sum\pi(\gamma)$ where the sum is over
all paths $\gamma=(x,x_1,\ldots, x_{n-1},y)$ such that $x_i\in
\Omega$ for $1\leq i\leq n-1$. If $A$ is a subset of $\Gamma$ and
$x,y\notin A$, one has
\begin{eqnarray*}
G(x,y) & =& G\bigl(x,y; A^c\bigr) + \sum
_{a\in A} G\bigl(x,a; A^c\bigr) G(a,y)
\\
& =& G\bigl(x,y; A^c\bigr) + \sum_{a\in A}
G(x,a) G\bigl(a, y; A^c\bigr),
\end{eqnarray*}
where $A^c$ denotes the complement of $A$. Indeed, the first (resp.,
second) formula is proved by splitting a path from $x$ to $y$
according to its first (resp., last) visit to $A$ if it exists, the
remaining trajectories giving the contribution $G(x,y; A^c)$. If all
trajectories from $x$ to $y$ have to go through $A$, this
contribution vanishes. This is used crucially in the usual arguments
for finitely supported measures, where one uses wide enough
``barriers'' $A$ between $x$ and $y$, that any trajectory from $x$ to
$y$ has to visit. In the infinite support situation, the contribution
$G(x,y; A^c)$ will always be present.

More generally, if $\Omega$ is a subset of $\Gamma$ containing $x$
and $y$, the above formula holds restricted to $\Omega$, that is,
%
%e2.1 #&#
\begin{eqnarray}
\label{eq:decompose_G} G(x,y; \Omega) & =& G\bigl(x,y; \Omega\cap A^c\bigr)+
\sum_{a\in A\cap\Omega} G\bigl(x,a; \Omega \cap A^c
\bigr) G(a,y; \Omega)
\nonumber
\\[-8pt]
\\[-8pt]
\nonumber
& =& G\bigl(x,y; \Omega\cap A^c\bigr)+\sum
_{a\in A\cap\Omega} G(x,a;\Omega) G\bigl(a, y; \Omega\cap A^c
\bigr).
\end{eqnarray}

Let $d$ be a word distance on $\Gamma$ coming from a finite symmetric
generating set. If $x$ and $y$ are at distance $d$, there is a path
from $x$ to $y$ with weight bounded from below by $C^{-d}$, and
staying close to a geodesic segment from $x$ to $y$. We deduce that,
for any $z$,
%
%e2.2 #&#
\begin{equation}
\label{eq_harnack} C^{-d(x,y)} \leq G(x,z)/G(y,z) \leq C^{d(x,y)},
\end{equation}
and similar inequalities hold for the Green function restricted to
any set containing a fixed size neighborhood of a geodesic segment
from $x$ to $y$. These inequalities are called Harnack inequalities.

The first visit Green function is $F(x,y)=G(x,y; \{y\}^c)$. It only
takes into account the first visits to $y$. When $\mu$ is a
probability measure, $F(x,y)$ is the probability to reach $y$
starting from $x$. One has $G(x,y)=F(x,y) G(y,y)=F(x,y) G(e,e)$.
Moreover, $F(x,y) G(y,z) \leq G(x,z)$ (since the concatenation of a
path from $x$ to $y$ with a path from $y$ to $z$ gives a path from
$x$ to $z$). Hence,
%
%e2.3 #&#
\begin{equation}
\label{eq:super_mult} G(x,y)G(y,z) \leq G(e,e) G(x,z).
\end{equation}
This shows that the left inequality in~\eqref{eq:ancona} is always
true.

%s3 #&#
\section{Pathological constructions in nonamenable groups}
\label{sec:pathological}

Let $\Gamma$ be a finitely generated nonamenable group. In this
section, we construct admissible symmetric probability measures with
exponential tails that behave in a pathological way regarding their
Green functions and Martin boundaries.

The basic idea is the following. We start from a symmetric
probability measure $\nu$ supported by a finite generating set of
$\Gamma$, and we add Dirac masses, with a very small mass but
supported far away from the identity. If we adjust carefully the
weights, the way to reach some far away points with highest
probability is to jump directly onto them (possibly with some short
jumps), since an accumulation of small jumps has a lower probability
that one single big jump. In this way, we will prescribe the behavior
of the Green function at different scales.

This type of behavior is reminiscent of L\'evy processes on $\R$: when
such a process is large, this is typically due to one single large
jump, the sum of the other jumps being negligible. We are
constructing a kind of L\'evy process on $\Gamma$, but with exponential
tails. The reason behind this counterintuitive phenomenon (in $\R$,
L\'evy processes need to have heavy tails) is that exponentially small
tails can still dominate the diffusive behavior since the diffusion
is also exponentially small in nonamenable groups.

The precise construction is as follows. Let $\rho<1$ be the spectral
radius of the random walk given by $\nu$. It is also the norm of the
associated Markov operator $P_\nu$ since $\nu$ is symmetric. Let us
fix a decreasing sequence $r_i$ (the exponential weights) with
$e^{r_0} \rho<1$ and $\lim r_i = r > 0$. Let us also fix a sequence
$n_i$ tending very quickly to infinity, and a symmetric measure
$\mu_i$ supported on the ball $B(e,n_i)$. Let
\[
\mu= \nu+ \sum e^{-r_i n_i} \mu_i \quad\mbox{and}\quad
\mu'=\mu/ \mu (\Gamma).
\]
The probability measure $\mu'$ is symmetric, and has exponential
tails of order $r$. We will see that we can prescribe the behavior of
its Green function. Since most interesting things happen with one
jump, we may equivalently work with $\mu'$ or $\mu$. It will be more
convenient to formulate the estimates for $\mu$.

The fact that $r_i$ is strictly decreasing is a central point of the
construction. Roughly speaking, if one uses only the measures $\mu_i$
with $i\leq I$, then a jump of size $n\leq n_I$ is made with
probability at most $e^{-r_I n}$. This implies that a point at a
large distance $n$ of $e$ will be reached with probability roughly
$e^{-r_I n}$. Let us take $n=n_{I+1}$, and $x$ a point in the support
of $\mu_{I+1}$. It can be reached by a direct jump, with probability
of the order of $e^{-r_{I+1} n}$, which is much bigger than $e^{-r_I
n}$ since $r_{I+1}< r_I$. Hence, direct jumps are more likely than a
combination of small jumps, as desired.

The rigorous version of this argument is slightly more complicated:
using the measures $\mu_i$ with $i\leq I$, one can in fact reach a
point at distance $n$ with a probability at most $C(s) e^{-s n}$ for
any $s<r_I$. Hence, we need to introduce another sequence: we fix
once and for all $s_{i+1} \in(r_{i+1}, r_i)$ (we also require that
$s_{i+1}<2r_{i+1}$ for technical reasons). In the following, we will
always assume that $n_i$ grows quickly enough so that
%
%e3.1 #&#
\begin{equation}
\label{eq:main_cond0} r_{i+1} n_{i+1} \geq s_i
n_i \geq r_i n_i + i + 1
\end{equation}
and
%
%e3.2 #&#
\begin{equation}
\label{eq:main_cond} \frac{1}{1 - \rho e^{r_0}} \sum e^{-(r_i-s_{i+1}) n_i} \leq
\frac{1}{2}.
\end{equation}
Since $r_i-s_{i+1}>0$, this can easily be guaranteed. From this point
on, the letter $C$ will denote a constant that can vary from one line
to the other, but does not depend on the choices we have made
provided the conditions~\eqref{eq:main_cond0}
and~\eqref{eq:main_cond} are satisfied.

Let us estimate the Green function $G(e,x)$ associated to $\mu$. This
is the sum of the weights of paths from $e$ to $x$. We will group
together those paths corresponding to the same sequence of measures
$\nu$ or $\mu_i$. This is most conveniently done in terms of Markov
operators as follows. We will write $P=P_\nu$ and $P_i=P_{\mu_i}$ for
the operators associated, respectively, to $\nu$ and $\mu_i$. They
satisfy $P_\mu=P+ \sum e^{-r_i n_i} P_i$. Developing $P_\mu^n$ and
grouping together the successive occurrences of $P$, we get
\begin{eqnarray*}
G(e,x) & =& \sum_n \bigl\langle
P_\mu^n \delta_x, \delta_e
\bigr\rangle
\\
& = &\sum_{\ell=0}^\infty\sum
_{a_0,i_1,a_1,\ldots, i_\ell, a_\ell} \bigl\langle P^{a_0} e^{-r_{i_1} n_{i_1} }
P_{i_1} P^{a_1} \cdots P^{a_{\ell-1}} e^{-r_{i_\ell} n_{i_\ell} }
P_{i_\ell}P^{a_\ell} \delta_x, \delta _e
\bigr\rangle.
\end{eqnarray*}
Each term in the double sum corresponds to the weight of several
trajectories. We will say that the associated sequence $t=(a_0, i_1,
a_1,\ldots, a_\ell)$ is a \emph{template} for this set of
trajectories. The norm of $P^a$ on $\ell^2(\Gamma)$ is bounded by
$\rho^a$, and the norm of $P_i$ is at most $1$. Hence, the sum of the
weights of trajectories in a template $t$ is bounded by its weight
$\pi(t)$ defined by
\[
\pi(t) = \rho^{a_0+\cdots+a_\ell}e^{-r_{i_1} n_{i_1}}\cdots e^{-r_{i_\ell} n_{i_\ell}}.
\]
Summing over the templates, we obtain
%
%e3.3 #&#
\begin{equation}
\label{eq:borne_basique} G(e,x) \leq \sumstar\pi(t),
\end{equation}
where the notation $\sum'$ indicates that we can remove from the sum
all those templates that give a vanishing contribution to $G(e,x)$,
that is, those for which no trajectory can go from $e$ to $x$.

It is not clear that the Green function of $\mu$ is well defined,
since $\mu$ is not a probability measure. We can
use~\eqref{eq:borne_basique} to show its finiteness, uniformly in
$x$. We have
%
%e3.4 #&#
\begin{equation}
\label{eq:geom} \sum_t \pi(t) \leq\sum
_\ell \Biggl( \sum_{a=0}^\infty
\rho ^a \Biggr)^{\ell+1} \biggl(\sum
_i e^{-r_i n_i} \biggr)^\ell.
\end{equation}
The sum over $\ell$ is a geometric series. It is finite if its
general term is $<1$, that is, $\frac{1}{1-\rho}\sum e^{-r_i n_i}<1$.
This is a consequence of (stronger)
condition~\eqref{eq:main_cond}. As $G(e,x) \leq\sum\pi(t)$, this
shows that $G(e,x)$ is well-defined and uniformly bounded.

We need more notation regarding templates. Given a template
$t=(a_0, i_1,\break  a_1,\ldots,a_\ell)$, define its length $\vert t\vert =
\sum a_k + \sum n_{i_k}$: any trajectory in the template ends at a
point at distance at most $\vert t\vert$ of the origin. Let also $\max t
= \sup i_k$ give the size of the biggest jump in $t$. We will write
$t_1 \cdot t_2$ for the concatenation of two templates $t_1$ and
$t_2$. It satisfies $\pi(t_1 \cdot t_2) = \pi(t_1) \pi(t_2)$.

The crucial estimates for template weights are the following.

%le3.1 #&#
\begin{lem}
\label{lem:ineg_i}
For every integers $i$ and $n$,
%
%e3.5 #&#
\begin{equation}
\label{eq:somme_large} \sum_{\max t \geq i} \pi(t) \leq C
e^{-r_i n_i}
\end{equation}
and
%
%e3.6 #&#
\begin{equation}
\label{eq:somme_petite} \sum_{\max t < i, \vert t\vert \geq n} \pi(t) \leq C
e^{-s_i n}.
\end{equation}
As a consequence, for every $i\in\N$ and for every $z\in\Gamma$,
%
%e3.7 #&#
\begin{equation}
\label{eq:ineg_i} G(e, z) \leq C e^{-r_i n_i} + C e^{-s_i \vert z\vert}.
\end{equation}
\end{lem}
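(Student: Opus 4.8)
The strategy is to bound the two sums over templates by organizing templates according to where their ``large'' jumps sit, and then collapsing the remaining freedom (the $P$-blocks $a_k$ and the small jumps) into convergent geometric series, exactly as in the computation leading to~\eqref{eq:geom}. For~\eqref{eq:somme_large}, a template with $\max t \geq i$ must contain at least one factor $e^{-r_{i_k} n_{i_k}}$ with $i_k \geq i$; since $r_j n_j$ is increasing in $j$ (this is where~\eqref{eq:main_cond0} enters: it forces $r_{i+1}n_{i+1} \geq r_i n_i + i + 1$, hence $r_j n_j \geq r_i n_i + \text{(something summable)}$ for $j \geq i$), that factor is at most $C e^{-r_i n_i}$, and more precisely $\sum_{j \geq i} e^{-r_j n_j} \leq C e^{-r_i n_i}$. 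I would fix the position of the first such large jump, write the template as $t = t' \cdot (\text{large jump } i_k) \cdot t''$, and use $\pi(t) = \pi(t')\,e^{-r_{i_k}n_{i_k}}\,\pi(t'')$ together with the fact (from~\eqref{eq:main_cond}, which is even stronger than what~\eqref{eq:geom} needs) that $\sum_{\text{all templates}} \pi(t) \leq 2$. Summing $\pi(t')$ and $\pi(t'')$ each over all templates gives a factor bounded by a constant, and summing $e^{-r_{i_k}n_{i_k}}$ over $i_k \geq i$ gives $C e^{-r_i n_i}$; summing over the finitely-many-at-a-time choice of which jump is the first large one costs only another geometric factor. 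This yields~\eqref{eq:somme_large}.

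For~\eqref{eq:somme_petite} I would argue as follows: if $\max t < i$, every jump in $t$ has size $i_k \leq i-1$, so each jump factor satisfies $e^{-r_{i_k} n_{i_k}} \leq e^{-r_{i-1} n_{i_k}} \leq e^{-s_i n_{i_k}}$ because $s_i < r_{i-1}$ by the choice $s_i \in (r_i, r_{i-1})$. Thus for such a template
\begin{equation*}
\pi(t) = \rho^{\sum a_k}\prod_k e^{-r_{i_k}n_{i_k}} \leq \rho^{\sum a_k}\,e^{-s_i \sum_k n_{i_k}} \leq e^{r_0 \sum a_k}\rho^{\sum a_k}\,e^{-s_i(\sum a_k + \sum n_{i_k})} = (\rho e^{r_0})^{\sum a_k}\,e^{-s_i \lgth{t}},
\end{equation*}
using $s_i < r_0 \leq r_0$ to absorb the $a_k$'s into $e^{-s_i \sum a_k}$ at the cost of the factor $e^{r_0 \sum a_k}$, and $\rho e^{r_0} < 1$ by hypothesis so this stays summable. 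Now I sum over all templates with $\max t < i$ and $\lgth{t} \geq n$: pulling out $e^{-s_i n}$ from $e^{-s_i \lgth t}$ (legitimate since $\lgth t \geq n$) leaves $\sum_{\lgth t \geq n} e^{-s_i(\lgth t - n)}(\rho e^{r_0})^{\sum a_k}$, which is bounded by a convergent multi-geometric series in the $a_k$'s, the $n_{i_k}$'s and $\ell$ — convergent again by~\eqref{eq:main_cond} together with $\rho e^{r_0}<1$ — hence by a constant $C$. This gives~\eqref{eq:somme_petite}.

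Finally,~\eqref{eq:ineg_i} is immediate from~\eqref{eq:borne_basique}: writing $n = \lgth z$, any template contributing a nonzero weight to $G(e,z)$ has $\lgth t \geq d(e,z) = n$ (since a trajectory in $t$ reaches $z$ and ends within distance $\lgth t$ of $e$), so
\begin{equation*}
G(e,z) \leq \sumstar_{\lgth t \geq n} \pi(t) \leq \sum_{\max t \geq i} \pi(t) + \sum_{\max t < i,\ \lgth t \geq n} \pi(t) \leq C e^{-r_i n_i} + C e^{-s_i n},
\end{equation*}
splitting according to whether $\max t \geq i$ or not and applying~\eqref{eq:somme_large} and~\eqref{eq:somme_petite}. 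The main obstacle is purely bookkeeping: making sure that when a jump factor is extracted (the ``first large jump'' in the first sum) or bounded (uniformly by $e^{-s_i n_{i_k}}$ in the second), the leftover sum over templates really does reassemble into the same convergent geometric series as in~\eqref{eq:geom}, with a constant independent of $i$, $n$ and of all the choices made — which is guaranteed precisely because~\eqref{eq:main_cond} gives a uniform bound $\leq 1/2 < 1$ on the relevant series and $\rho e^{r_0} < 1$ controls the $P$-blocks.
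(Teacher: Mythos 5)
Your treatment of~\eqref{eq:somme_large} matches the paper's (decompose around a large jump, bound the two flanking template sums by a constant, use~\eqref{eq:main_cond0} on the middle factor), and your derivation of~\eqref{eq:ineg_i} from the other two is the same as the paper's. The gap is in your proof of~\eqref{eq:somme_petite}.

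You bound each jump factor by $e^{-r_{i_k}n_{i_k}} \leq e^{-s_i n_{i_k}}$ using only $r_{i_k} > s_i$, arriving at $\pi(t) \leq (\rho e^{r_0})^{\sum a_k}e^{-s_i\lgth{t}}$. That inequality is correct, but it throws away too much: what remains after extracting $e^{-s_i n}$ is
\begin{equation*}
\sum_{\ell\geq 0}\left(\sum_{a\geq 0}(\rho e^{r_0}e^{-s_i})^a\right)^{\ell+1}\left(\sum_{j<i}e^{-s_i n_j}\right)^{\ell},
\end{equation*}
and nothing in the hypotheses forces the ratio of this geometric series to be below $1$. Condition~\eqref{eq:main_cond} controls $\sum_j e^{-(r_j-s_{j+1})n_j}$, which is a genuinely different (and typically much smaller) quantity than $\sum_{j<i}e^{-s_in_j}$: for small $j$ the exponent $s_i n_j \approx r\, n_j$ can be far smaller than $(r_j - s_{j+1})n_j$ if $r_j - s_{j+1} \gg r$, so~\eqref{eq:main_cond} may hold with $n_0$ so small that $e^{-s_i n_0}$ alone already exceeds $1-\rho e^{r_0}$, making your series diverge. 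Your invocation of~\eqref{eq:main_cond} at this point is therefore not justified. The paper avoids this by factoring $\pi(t)$ as $e^{-s_i\lgth{t}}\cdot(\rho e^{s_i})^{\sum a_k}\cdot\prod_k e^{-(r_{i_k}-s_i)n_{i_k}}$ and \emph{keeping} the factors $e^{-(r_{i_k}-s_i)n_{i_k}}$ (further bounded by $e^{-(r_{i_k}-s_{i_k+1})n_{i_k}}$, valid since $s_i \leq s_{i_k+1}$ for $i_k < i$). Those are exactly the quantities whose sum~\eqref{eq:main_cond} bounds by $(1-\rho e^{r_0})/2$, which is what makes the geometric series in $\ell$ converge with a constant independent of $i$. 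To repair your argument, do not discard $e^{-(r_{i_k}-s_i)n_{i_k}}$; carry it along as the paper does.
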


Inequality~\eqref{eq:somme_large} controls what happens when
there is at least one big jump, while~\eqref{eq:somme_petite}
controls the combination of several small jumps. The last
inequality~\eqref{eq:ineg_i} is a consequence of the other two. Note
that, if $\vert z\vert$ is comparable to~$n_i$, then the second term
in~\eqref{eq:ineg_i} is negligible compared to the first one since
$s_i n_i - r_i n_i\to+\infty$ by~\eqref{eq:main_cond0}. This shows
rigorously that the most efficient way to visit $z$ is to do one big
jump rather than many small jumps, as we already explained
informally.

\begin{pf*}{Proof of Lemma \ref{lem:ineg_i}}
Let us first show~\eqref{eq:somme_large}. A template $t$ with $\max
t\geq i$ can be decomposed as $t=t_1 \cdot(j) \cdot t_2$ where $t_1$
and $t_2$ are shorter templates and $j$ corresponds to a jump of size
$n_j\geq n_i$. Therefore,
\[
\sum_{\max t \geq i} \pi(t) \leq \biggl(\sum
_{t_1} \pi(t_1) \biggr) \Biggl(\sum
_{j=i}^\infty e^{-r_j
n_j} \Biggr) \biggl(\sum
_{t_2} \pi(t_2) \biggr).
\]
The first sum and the last sum are finite by~\eqref{eq:geom}. The
middle one is bounded by $C e^{-r_i n_i }$ thanks
to~\eqref{eq:main_cond0}. This proves~\eqref{eq:somme_large}.

Let us now show~\eqref{eq:somme_petite}. Writing $t=(a_0,i_1,\ldots,
a_\ell)$, the corresponding sum is
\[
\sum_{\max t < i,\vert t\vert\geq n} e^{-s_i(a_0+\cdots+a_\ell+
n_{i_1}+\cdots+ n_{i_\ell})} \bigl(\rho
e^{s_i}\bigr)^{a_0+\cdots+ a_\ell} e^{-(r_{i_1}-s_i) n_{i_1}} \cdots e^{-(r_{i_\ell} -s_i) n_{i_\ell}}.
\]
The first factor is $e^{-s_i \vert t\vert} \leq e^{-s_i n}$. This yields
a bound
\begin{eqnarray*}
&&e^{-s_i n} \sum_{\max t < i} \bigl(\rho
e^{s_i}\bigr)^{a_0+\cdots+ a_\ell} e^{-(r_{i_1}-s_i) n_{i_1}} \cdots e^{-(r_{i_\ell} -s_i) n_{i_\ell}}
\\
&&\qquad= e^{-s_i n} \sum_\ell \Biggl(\sum
_{a=0}^\infty\bigl(\rho e^{s_i}
\bigr)^a \Biggr)^{\ell+ 1} \Biggl(\sum
_{j=0}^{i-1} e^{-(r_j -s_i) n_j} \Biggr)^\ell.
\end{eqnarray*}
This is again a geometric series. Let us bound $e^{s_i}$ with
$e^{r_0}$ in the first factor, and $e^{-(r_j -s_i) n_j}$ with
$e^{-(r_j - s_{j+1}) n_j}$ in the second factor. We get that the
general term of this geometric series is bounded by
\[
\frac{1}{1- \rho e^{r_0}} \sum_{j\geq0} e^{-(r_j - s_{j+1}) n_j}.
\]
Condition~\eqref{eq:main_cond} guarantees that this is $\leq1/2$.
Hence, the geometric series is uniformly bounded, yielding a bound $C
e^{-s_i n}$. This proves~\eqref{eq:somme_petite}.

Let us finally prove~\eqref{eq:ineg_i}
using~\eqref{eq:borne_basique}. To go from $e$ to $z$, the templates
with $\max t \geq i$ give an overall contribution at most $C e^{-r_i
n_i}$, by~\eqref{eq:somme_large}. On the other hand, if $\max t < i$,
then it is possible to reach $z$ using a trajectory in the template
only if $\vert t\vert \geq\vert z\vert$. By~\eqref
{eq:somme_petite}, those
terms contribute at most $C e^{-s_i \vert z\vert}$.
\end{pf*}

This lemma implies that, in general, there is no Ancona
inequality~\eqref{eq:ancona} in nonamenable groups, for measures
with exponential tails.

%pr3.2 #&#
\begin{prop}
\label{prop:pas_ancona}
Let $\Gamma$ be a finitely generated nonamenable group. There exists
on $\Gamma$ an admissible symmetric probability measure $\mu'$ with
exponential tails whose Green function $G'=G_{\mu'}$ does not satisfy
Ancona inequalities: there is no constant $C$ such that $G'(x,z) \leq
C G'(x,y)G'(y,z)$ for any $x,y,z\in\Gamma$ on a geodesic in this
order.
\end{prop}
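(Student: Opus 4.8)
The plan is to use the measure $\mu$ constructed above (or rather its normalization $\mu'$), with the freedom to choose the sequences $n_i$ and the auxiliary measures $\mu_i$, and to exhibit a triple of points $x,y,z$ on a geodesic for which the ratio $G'(x,z)/(G'(x,y)G'(y,z))$ is as large as we wish. Since $G' = G/\mu(\Gamma)$ and $\mu(\Gamma)$ is a fixed constant (bounded because of~\eqref{eq:main_cond}), it suffices to work with $G = G_\mu$: Ancona's inequality $G(x,z)\leq C G(x,y)G(y,z)$ would fail for $G$ iff it fails for $G'$, up to adjusting the constant. By left-invariance we take $x = e$.

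The idea is the following. Fix an index $I$. Choose $y$ at distance $\sim n_I/2$ from $e$ along a geodesic, and choose $z$ in the support of $\mu_I$ at distance $\sim n_I$ from $e$, with $y$ on a geodesic from $e$ to $z$; we arrange $\mu_I$ so that $\mu_I$ charges such a point $z$, i.e.\ $G(e,z) \geq C^{-1} e^{-r_I n_I}$ (the direct jump). Now we estimate the two factors on the right. For $G(e,y)$ and $G(y,z)$, the point is that both $y$ and $z$ lie at distance roughly $n_I/2$, respectively $n_I/2$, and more importantly at distance much larger than $n_{I-1}$ but we do \emph{not} want to reach them by a jump of size $\geq I$; a jump of size $\geq I$ costs at least $e^{-r_I n_I}$ by~\eqref{eq:somme_large}, and once we relate $n_I$ to the distances this will be small compared to the ``small-jump'' cost. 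Precisely, applying~\eqref{eq:ineg_i} with index $I$ to $z' = e^{-1}y$ and to $y^{-1}z$, both of length $\sim n_I/2$, gives
  \begin{equation*}
  G(e,y) \leq C e^{-r_I n_I} + C e^{-s_I n_I/2}, \qquad G(y,z) \leq C e^{-r_I n_I} + C e^{-s_I n_I/2}.
  \end{equation*}
The dominant term here must be compared with $e^{-r_I n_I}$: since $s_I > r_I$ we have $e^{-s_I n_I /2}$ versus $e^{-r_I n_I}$, and for the argument to work we need $s_I/2 > r_I$, which is \emph{not} guaranteed (we only required $s_{i+1} < 2 r_{i+1}$, the opposite). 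The fix is to choose $y$ and $z$ \emph{not} at the midpoint but much closer to $e$: take $d(e,y) = d(y,z) = m$ where $m \ll n_I$ is a fixed (large but bounded relative to $n_I$) distance, and $z$ still chosen so that it is reached by a jump. Actually the cleanest choice: let $m$ be fixed and let $n_I \to\infty$; put $y, z$ at distance $m, 2m$ from $e$ on a geodesic. We cannot then demand $z \in \operatorname{supp}\mu_I$ for a \emph{fixed} $z$ and \emph{varying} $\mu_I$ unless we build $\mu_I$ to charge this specific $z$; so instead we fix the whole data once, choosing the support of $\mu_I$ to contain a fixed point $z_I$ at distance exactly $2m$ — but distinct $\mu_I$ must be supported near distance $n_i\to\infty$. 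The correct resolution is to let $m = m_I$ grow slowly, e.g.\ $m_I = \lfloor \varepsilon n_I\rfloor$ with $\varepsilon$ small enough that $\varepsilon s_I > r_I$ fails...

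I therefore reorganize: the honest approach is to pick $z$ in $\operatorname{supp}\mu_I$ with $d(e,z) \sim n_I$ and $y$ with $d(e,y)\sim n_I/2$ on a geodesic, use $G(e,z)\geq C^{-1}e^{-r_I n_I}$, and bound $G(e,y)G(y,z)$ from above using~\eqref{eq:ineg_i} \emph{with the index $I+1$} applied to each factor. Then each factor is $\leq C e^{-r_{I+1} n_{I+1}} + C e^{-s_{I+1} n_I/2}$; by~\eqref{eq:main_cond0}, $r_{I+1}n_{I+1} \geq r_I n_I$, so the first term is $\leq C e^{-r_I n_I}$, which dominates unless $s_{I+1} n_I /2 < r_I n_I$; since $s_{I+1} < r_I$ this is automatic, so in fact the \emph{second} term dominates: $G(e,y) \leq C e^{-s_{I+1} n_I/2}$ and likewise $G(y,z)\leq C e^{-s_{I+1}n_I/2}$, hence $G(e,y)G(y,z)\leq C e^{-s_{I+1} n_I}$. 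Comparing,
  \begin{equation*}
  \frac{G(e,z)}{G(e,y)G(y,z)} \geq \frac{C^{-1}e^{-r_I n_I}}{C e^{-s_{I+1} n_I}} = C^{-1} e^{(s_{I+1}-r_I) n_I}.
  \end{equation*}
But $s_{I+1} - r_I < 0$, so this tends to $0$ — the wrong direction! The resolution of the sign issue is the real content: one must instead split $G(e,z)$ using the formula with $A = B(y, \text{const})$... no. The genuinely correct route, and the one I expect to be the main obstacle, is to make $z$ reachable \emph{only} by a direct jump that does \emph{not} pass near $y$, so that $G(y,z)$ is governed by going backwards from $y$ toward $e$ and then jumping — giving an extra small factor $G(e,y)$-type cost — whereas $G(e,z)$ pays that cost only once. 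Concretely: choose $\mu_I$ supported on a single pair $\{\pm z_0\}$ far from every geodesic through the relevant $y$'s won't work either since $z$ must be on the geodesic $[e,z]$.

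Given these difficulties, the plan I would actually execute is: fix $I$; pick $z\in\operatorname{supp}\mu_I$ and $y$ on $[e,z]$ with $d(e,y)=d(y,z)=n_I/2$ (assume $n_I$ even and the support point symmetric). Lower bound: $G(e,z)\geq \mu(z)\geq C^{-1}e^{-r_I n_I}$ using that $\mu_I$ charges $z$ with mass $\geq C^{-1}$ and the $e^{-r_I n_I}$ prefactor. Upper bound on $G(y,z)$: since $d(y,z)=n_I/2 < n_I$, \emph{no} single jump from $\mu_I$ (supported at distance $\leq n_I$) — rather, one \emph{can} jump, so use~\eqref{eq:ineg_i} with index $I$: $G(y,z)\leq C e^{-r_I n_I}+ C e^{-s_I n_I/2}$. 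For this to be $\leq C e^{-r_I n_I}$ we need $s_I n_I/2 \geq r_I n_I$, i.e.\ $s_I\geq 2r_I$; so \textbf{I add the requirement $s_{i}>2r_i$} (replacing the technical condition $s_{i+1}<2r_{i+1}$, which is only needed elsewhere and can be dropped here, or one takes a different index). Wait — but $s_i\in(r_i,r_{i-1})$ and $r_{i-1}$ could be close to $r_i$, so $s_i<2r_i$ may be forced.

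\smallskip

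In the interest of honesty about the plan: the \emph{main obstacle} is arranging the arithmetic of the exponents so that a single long jump genuinely beats its two-step subdivision \emph{in the Green function}, not merely in one template. The clean way, which I believe is what the paper does, is to choose $y$ at distance $n_I$ from $e$ (not $n_I/2$) and $z$ at distance $2 n_I$ with $z\in\operatorname{supp}\mu_{I+1}$ reachable by a direct jump, so $G(e,z)\geq C^{-1}e^{-r_{I+1}\cdot 2n_I}$ roughly; while $G(e,y), G(y,z)$ each have length $n_I$ and, by~\eqref{eq:ineg_i} at index $I+1$, are $\leq C e^{-r_{I+1}n_{I+1}} + C e^{-s_{I+1}n_I} \leq C e^{-s_{I+1}n_I}$ once $r_{I+1}n_{I+1}\gg s_{I+1}n_I$ (ensured by~\eqref{eq:main_cond0} with $n_{I+1}\gg n_I$). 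Then
  \begin{equation*}
  \frac{G(e,z)}{G(e,y)G(y,z)} \geq C^{-1}\, e^{-2 r_{I+1} n_I}\, e^{2 s_{I+1} n_I} = C^{-1} e^{2(s_{I+1}-r_{I+1})n_I} \xrightarrow[I\to\infty]{} \infty,
  \end{equation*}
since $s_{I+1}>r_{I+1}$. This works — the point being that $z$ is at distance $2n_I$ but is reached by \emph{one} jump of the \emph{cheap} measure $\mu_{I+1}$ (rate $r_{I+1}$), whereas $y$ and the segment $[y,z]$ can only be crossed by \emph{many} jumps of the \emph{expensive} measures $\mu_j$, $j\leq I$ (effective rate $s_{I+1}>r_{I+1}$), because $\mu_{I+1}$ is supported on a sphere of radius $\sim n_{I+1}\gg 2n_I$ and so never lands inside $B(e, 2n_I)$ in a single useful step for reaching $y$ or moving from $y$ to $z$. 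I would need to verify carefully that $\operatorname{supp}\mu_{I+1}$ can be chosen to contain a point at distance $2n_I$ with $n_{I+1}$ as large as we like — this requires $\Gamma$ infinite, which holds since it is non-amenable — and that the lower bound $G(e,z)\geq C^{-1}e^{-r_{I+1}n_{I+1}}$-type estimate matches the exponent $2n_I$; this is the step where I would be most careful, choosing $z$ at distance exactly $2n_I$ inside a ball of radius $n_{I+1}$ and giving $\mu_{I+1}$ a uniformly positive mass there. Normalizing to $\mu'$ only changes the constant $C$, completing the proof.
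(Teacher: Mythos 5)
Your first approach was the right one, and you abandoned it because of a sign error in your analysis. Take $z_I$ in $\operatorname{supp}\mu_I$ at distance $n_I$ from $e$ and $y_I$ the midpoint of a geodesic $[e,z_I]$. The lower bound $G(e,z_I)\geq C^{-1}e^{-r_I n_I}$ from the direct jump is fine. For the upper bounds,~\eqref{eq:ineg_i} at index $I$ gives $G(e,y_I)\leq C(e^{-r_I n_I}+e^{-s_I n_I/2})$ and similarly for $G(y_I,z_I)$. You then claim ``for the argument to work we need $s_I/2>r_I$.'' This is false: the argument works in \emph{both} regimes. Indeed, $G(e,y_I)G(y_I,z_I)\leq C^2\bigl(\max(e^{-r_I n_I},e^{-s_I n_I/2})\bigr)^2 = C^2\max\bigl(e^{-2 r_I n_I},\, e^{-s_I n_I}\bigr)$. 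Since $r_I>0$, the first quantity is $o(e^{-r_I n_I})$; since $s_I>r_I$, the second is also $o(e^{-r_I n_I})$. Either way the product is negligible against $G(e,z_I)$, contradicting Ancona. The paper's extra requirement $s_i<2r_i$ is imposed only so that~\eqref{eq:ineg_i} collapses to the cleaner one-term bound $G'(e,y_I)\leq Ce^{-s_i n_i/2}$; it is not logically necessary for the contradiction. You had the paper's proof in hand and discarded it.

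Your subsequent reorganizations do not repair anything and introduce a genuine gap. In the last version you place $z$ at distance $2n_I$ inside $\operatorname{supp}\mu_{I+1}$ and assert a lower bound of the form $G(e,z)\geq C^{-1}e^{-2 r_{I+1} n_I}$. But the weight of the direct jump via $\mu_{I+1}$ is $e^{-r_{I+1} n_{I+1}}\mu_{I+1}(z)$, whose exponential part is $e^{-r_{I+1} n_{I+1}}$ with $n_{I+1}\gg 2n_I$; it has nothing to do with $2n_I$. So the stated lower bound is not established, and the ratio $G(e,z)/(G(e,y)G(y,z))$ could in fact go to $0$ in your setup. You flag this as ``the step where I would be most careful,'' which is candid, but it leaves the proof incomplete. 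To make a version of your final idea work you would have to either tie $n_{I+1}$ to $2n_I$ (which conflicts with ``$n_i$ tends very quickly to infinity'' needed for~\eqref{eq:main_cond0}--\eqref{eq:main_cond}) or analyze the $\nu$-only contribution to $G(e,z)$, neither of which you carry out. The simple fix is to return to the first computation and correct the sign.
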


\begin{pf}
We use the previous construction, with $\mu_i = (\delta_{z_i} +
\delta_{z_i^{-1}})/2$ where $z_i$ is a point at distance $n_i$ of
$e$. We will assume that $n_i$ is even, and we will denote by $y_i$
the midpoint of a geodesic segment from $e$ to $z_i$. We will show
that
%
%e3.8 #&#
\begin{equation}
\label{eq:xi} G'(e, z_i) \geq C e^{-r_i n_i}
\end{equation}
and
%
%e3.9 #&#
\begin{equation}
\label{eq:yi} G'(e,z) \leq C e^{-s_i n_i/2}
\end{equation}
for any $z$ with $d(e,z)=n_i/2$. Hence, $G'(e,y_i) G'(y_i, z_i) \leq
C^2 e^{-s_i n_i}=\break o(G'(e,z_i))$, contradicting any Ancona inequality.

Inequality~\eqref{eq:xi} is obvious since the Green function is
bounded from below by the contribution of single jumps: $G'(e,z_i)
\geq\mu'(z_i) = \mu(\Gamma)^{-1} e^{-r_i n_i}/2$.

As $G'\leq G$, inequality~\eqref{eq:yi} follows
from~\eqref{eq:ineg_i} since $\vert z\vert=n_i/2$. [The first term
in~\eqref{eq:ineg_i} is dominated by the second term since we have
requested that $s_i < 2 r_i$.]
\end{pf}

We now turn to the proof of Theorem~\ref{thmm:pathological}. Starting
from a sequence $y_n$ going to infinity, we wish to construct the
measures $\mu$ and $\mu'$ (using the above construction) so that
$G'=G_{\mu'}$ is such that, for some point $z$, the sequence
$G'(z,y_n)/G'(e,y_n)$ does not converge. We will write $G'=G_{\mu'}$
and $G=G_{\mu}$.

We need to fix an additional sequence $s'_i \in(r_i, s_i)$, for
instance the middle of this interval, to get some additional freedom.
Taking a subsequence of $y_n$, we can assume that
%
%e3.10 #&#
\begin{equation}
\label{eq:marge} \bigl(s'_i/r_i -1\bigr)
\vert y_i\vert \to\infty,\qquad \bigl(1-s'_i/s_i
\bigr)\vert y_i\vert \to \infty.
\end{equation}
Let $n_i = (s'_i/r_i) \vert y_i\vert$. One has $y_i\in B(e,n_i)$ by
construction. The condition \eqref{eq:marge} ensures that, for any
$C$, for large enough $i$, a point $y$ with $\vert y\vert\leq
\vert y_i\vert+C$ belongs to $B(e, n_i)$. Taking a further subsequence of
$y_i$ if necessary, we can also assume that growth
conditions~\eqref{eq:main_cond0} and~\eqref{eq:main_cond} are
satisfied by $n_i$.

To get the divergence of $G'(z,y_i)/G'(e,y_i)$ for some point $z$, we
will choose the measures $\mu_i$ so that the limits of this sequence
are different along even and odd values of $i$ (with a limit of the
order of $1$ along odd $i$, and a small limit along a subsequence of
even $i$). For $i$ even, we let $\mu_i =
(\delta_{y_i}+\delta_{y_i^{-1}})/2$. The choice of $\mu_i$ for odd
$i$ is postponed, let us first see the consequences of our choice for
even $i$. The statements we will give now are valid for any choice of
$\mu_i$ for odd $i$, with the only restriction that it has to be a
probability measure, supported in $B(e,n_i)$.

Let us describe the asymptotics of $G(e, zy_i)$ for any fixed $z$.

%le3.3 #&#
\begin{lem}
\label{lem:existe_Phi}
There exists a function $\Phi\dvtx\Gamma\to(0,+\infty)$, tending
to $0$
at infinity, such that for every $z$ there exist infinitely many even
indices $i$ for which
\[
G(e,z y_i) \leq\Phi(z) e^{-r_i n_i}.
\]
\end{lem}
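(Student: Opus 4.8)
The plan is to use the basic bound~\eqref{eq:borne_basique} and the template estimates of Lemma~\ref{lem:ineg_i}, together with the specific choice $\mu_i = (\delta_{y_i}+\delta_{y_i^{-1}})/2$ for even $i$, to extract the dominant contribution to $G(e,zy_i)$. Fix $z$ and fix a large even index $i$ (to be chosen in a moment). Split the sum $\sumstar \pi(t)$ over templates contributing to $G(e,zy_i)$ according to the size of the largest jump $\max t$: either $\max t \geq i+1$, or $\max t < i+1$. For the first part, the crude estimate~\eqref{eq:somme_large} gives a contribution at most $C e^{-r_{i+1} n_{i+1}}$, which by the growth condition~\eqref{eq:main_cond0} is at most $C e^{-r_i n_i - (i+1)} \leq C 2^{-i} e^{-r_i n_i}$; summing over the remaining indices this is negligible. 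So the point is to understand templates with $\max t < i+1$, i.e.\ templates using only the jumps $\mu_0,\dotsc,\mu_i$.

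The key observation is that for even $i$ the jump $\mu_i$ is a single jump by $y_i^{\pm 1}$, which lands us (up to the short-range diffusion $\nu$ and the earlier jumps) near $zy_i$ exactly when it is used an odd number of times with a net $+1$ balance — more precisely, a template with $\max t < i+1$ that can carry a trajectory from $e$ to $zy_i$ must either (a) not use the index $i$ at all, in which case it only uses $\mu_0,\dotsc,\mu_{i-1}$ and must have $\lgth{t}\geq \lgth{zy_i} \geq \lgth{y_i} - \lgth{z} \geq (r_i/s_i')n_i - \lgth{z}$, so by~\eqref{eq:somme_petite} its total contribution is at most $C e^{-s_i((r_i/s_i')n_i - \lgth z)} = C e^{\lgth z s_i} e^{-(s_i r_i/s_i') n_i}$; or (b) use the index $i$ at least once, in which case we factor the template as $t_1 \cdot (i) \cdot t_2$ at (say) the first occurrence of $i$, and bound $\sum \pi(t_1) \leq C$ while the residual template $t_2$ together with a jump of $\mu_i$ must reach $zy_i$. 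Case (b) is where the actual asymptotics live: here $\pi(t) = \pi(t_1)\, e^{-r_i n_i}\, \pi(t_2)$, so the total over case (b) is $e^{-r_i n_i}$ times $\sum_{t_1}\pi(t_1)$ times a sum over residual data $t_2$ (and over which of the two values $y_i$ or $y_i^{-1}$ the jump realizes). After the jump by $y_i$, reaching $zy_i$ amounts to a path from $y_i$ to $zy_i$, i.e.\ by translation invariance a path from $e$ to $z$ shifted, using only short-range and small jumps; this contributes a factor depending only on $z$ (not on $i$, once $i$ is large enough that $B(e,n_i)$ contains all the relevant points — which is exactly what~\eqref{eq:marge} guarantees). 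The jump by $y_i^{-1}$ would land near $y_i^{-1} \cdot(\text{small})$, far from $zy_i$, so it contributes nothing.

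Assembling: for even $i$ large enough relative to $z$, one gets $G(e,zy_i) \leq C e^{-r_i n_i}(\text{something}(z) + C2^{-i} + Ce^{\lgth z s_i}e^{-(s_i r_i/s_i' - r_i)n_i})$. By the first part of~\eqref{eq:marge}, $(s_i'/r_i - 1)\lgth{y_i}\to\infty$ forces $(s_i r_i/s_i' - r_i)n_i \to \infty$ after a short computation, so the last error term is $o(e^{-r_i n_i})$ and in fact, since $n_i\to\infty$ superfast, beats any fixed function of $z$ for $i$ large. Therefore we may define $\Phi(z)$ to be, say, twice the $z$-dependent ``something$(z)$'' above — a quantity built from a convergent sum of template weights from $e$ to $z$ — plus a fixed additive constant to absorb the small-$i$ cases; one checks $\Phi$ tends to $0$ at infinity because ``something$(z)$'' is dominated by $G(e,z)$ which by~\eqref{eq:ineg_i} (taking $i$ with $n_i$ of order $\lgth z$) tends to $0$. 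Finally, the phrase ``there exist infinitely many even $i$'' is handled by noting that the threshold ``$i$ large enough relative to $z$'' depends on $z$, but for each $z$ all sufficiently large even $i$ work, hence infinitely many; to make $\Phi$ independent of $i$ we simply take the sup/limsup over those $i$, which is finite by the uniform bound on Green functions established after~\eqref{eq:geom}.

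**Main obstacle.** The delicate point is the clean separation in case (b): showing that, after conditioning on one jump by $y_i$, the remaining combinatorial sum (over $t_1$, over $t_2$, over the position of the jump) really does converge to a $z$-dependent constant uniformly in the large even $i$, rather than growing with $i$. This requires using $\lgth{z}\leq \lgth{y_i}+C \Rightarrow z\in B(e,n_i)$ (from~\eqref{eq:marge}) so that the measures $\mu_j$ for $j<i$ do not ``see'' the difference between working near $e$ and near $y_i$, plus a careful check that double-counting (a trajectory using the $y_i$-jump several times) only improves the bound. The bookkeeping of which jump occurrences to factor at, and making sure the $y_i^{-1}$ branch genuinely contributes nothing because $d(y_i^{-1},zy_i)$ is of order $2\lgth{y_i}\gg n_i/2$, is where most of the care goes.
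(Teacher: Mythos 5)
Your decomposition of templates (large jumps, no $\mu_i$-jump, at least one $\mu_i$-jump) roughly parallels the paper's split into $J_1, J_3$ and $J_2\cup J_4$, and your handling of the first two pieces is correct. But the core of the lemma is the estimate on templates using the jump $\mu_i$, and there your argument has a genuine error and a missing idea.

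First, the translation-invariance claim is wrong. You write that ``reaching $zy_i$ after a jump to $y_i$ amounts, by translation invariance, to a path from $e$ to $z$''. The random walk is right-invariant, i.e.\ the transition kernel $p(a,b)=\mu(a^{-1}b)$ is invariant under \emph{left} multiplication, so a path from $y_i$ to $zy_i$ has the same weight as a path from $e$ to $y_i^{-1}zy_i$, \emph{not} to $z$. Since $y_i^{-1}zy_i$ is typically a very far element (of length $\approx 2n_i+\lgth{z}$), this quantity depends strongly on $i$, and your ``something$(z)$'' is not a quantity depending only on $z$. Moreover the jump need not occur from $e$: before the jump the walk is at some $u$ with $\lgth{u}\le\lgth{t_1}$, and after the jump it is at $uy_i^{\pm1}$, not $y_i$. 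So there is no clean identification of the residual contribution with a Green function at $z$, and the asserted bound ``something$(z)\le G(e,z)$'' is not established.

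Second, you are missing the counting argument that makes the statement ``for infinitely many even $i$'' necessary. The paper handles the $\mu_i$-jump templates by introducing $\phi_i(z)=\min\{\lgth u+\lgth v \,:\, zy_i=uy_i^{\pm1}v\}$, noting that a contributing template $t_1\cdot(i)\cdot t_2$ forces $\lgth{t_1}+\lgth{t_2}\ge\phi_i(z)$, hence one of $t_1,t_2$ is long, and applying~\eqref{eq:somme_petite} to get $\Sigma_4\le Ce^{-r\phi_i(z)/2}e^{-r_in_i}$. The delicate point is that $\phi_i(z)$ \emph{can} be small for certain $i$ even when $z$ is far away (a coincidence such as $z=uy_i^{\pm1}vy_i^{-1}$ for small $u,v$); the paper disposes of this by showing that the set of $z$ with $\limsup_i\phi_i(z)\le A$ has cardinality at most $2(\Card B(e,A))^2$, so that apart from a finite exceptional set, $\phi_i(z)>A$ along a subsequence of $i$'s. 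This is why the lemma asserts only ``infinitely many even $i$'', not ``all sufficiently large even $i$'' as you claim; your stronger statement is not proven, and the mechanism you propose (take a limsup over $i$ of a quantity that you have not shown to be small) does not replace this cardinality argument. Finally, you never isolate the case of two or more $\mu_i$-jumps (the paper's $J_2$), which must be treated separately and shown to be $o(e^{-r_in_i})$ before the single-jump decomposition $t_1\cdot(i)\cdot t_2$ can be used unambiguously.
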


Let us stress that the function $\Phi$ does not depend on the choice
of $\mu_i$ for odd $i$.

\begin{pf*}{Proof of Lemma \ref{lem:existe_Phi}}
The idea is that, to go from $e$ to $zy_i$, the random walk will most
likely make one big jump of size $n_i$ (corresponding to the measure
$\mu_i$), with weight $e^{-r_i n_i}/2$, and several small jumps. If
$z$ is large enough, a large number of small jumps will be needed,
giving a small contribution $\Phi(z)$. The other cases (no big jump,
or too many big jumps) will have a very small contribution. In this
proof, $i$ will implicitly be restricted to even values.

For the rigorous computation, we start from the
bound~\eqref{eq:borne_basique} and cut the sum into several pieces.
We should specify in which piece a template $t=(a_0,i_1,\ldots,
a_\ell)$ goes.
\begin{itemize}
\item We put in $J_1$ the templates with $\max t
>i$.
\item We put in $J_2$ the templates where at least two jumps
$i_k$ are equal to $i$.
\item We put in $J_3$ the templates with $\max t < i$ for which a
trajectory can go from $e$ to $zy_i$.
\item Finally, we put in $J_4$ the remaining templates, that is,
those with a single jump of size $n_i$ and other shorter
jumps, for which a trajectory can go from $e$ to $zy_i$.
\end{itemize}
Denote by $\Sigma_p$ the sum corresponding to templates in $J_p$. We
will show that, for $p\leq3$, one has $\Sigma_p = o(e^{-r_i n_i})$
when $i$ tends to infinity, and that for infinitely many indices $i$
one has $\Sigma_4 \leq\Psi(z) e^{-r_i n_i}$ for some function $\Psi$
tending to $0$ at infinity. The result follows with $\Phi=2\Psi$.

Inequality~\eqref{eq:somme_large} implies that $\Sigma_1 \leq C
e^{-r_{i+1}n_{i+1}}$. As $r_{i+1}n_{i+1}
> r_i n_i +i+1$ by~\eqref{eq:main_cond0}, this is negligible compared
to $e^{-r_i n_i}$, as desired.

A template $t\in\Sigma_2$ can be decomposed as $t= t_1 \cdot(i)
\cdot t_2 \cdot(i) \cdot t_3$, for some templates $t_1$, $t_2$ and
$t_3$. Since the sum of the weights of all templates is bounded, we
obtain
\[
\Sigma_2 \leq C e^{-r_i n_i} C e^{-r_i n_i} C.
\]
This is again negligible with respect to $e^{-r_i n_i}$.

A template $t$ in $J_3$ satisfies $\vert t\vert \geq\vert zy_i\vert$ and
$\max t < i$. Hence,~\eqref{eq:somme_petite} gives the bound
$\Sigma_3 \leq C e^{-s_i \vert zy_i\vert}$. We have
\begin{eqnarray*}
s_i \vert zy_i\vert - r_i
n_i & \geq& s_i\bigl(\vert y_i\vert - \vert z
\vert\bigr) - r_i n_i = s_i\bigl(\vert
y_i\vert - \vert z\vert\bigr) -s'_i \vert
y_i\vert
\\
& =& s_i \biggl( \biggl(1-\frac{s'_i}{s_i} \biggr)\vert
y_i\vert - \vert z\vert \biggr).
\end{eqnarray*}
As $(1- s'_i/s_i) \vert y_i\vert \to\infty$ by~\eqref{eq:marge}, this
tends to infinity. Hence,
%
%e3.11 #&#
\begin{equation}
\label{eq:sizxi_negl} e^{-s_i \vert zy_i\vert} = o\bigl(e^{-r_i n_i}\bigr).
\end{equation}
This shows that $\Sigma_3$ is negligible with respect to $e^{-r_i
n_i}$.

It remains to estimate $\Sigma_4$. A template $t\in J_4$ can be
decomposed uniquely as $t=t_1 \cdot(i) \cdot t_2$, for some
templates $t_1$ and $t_2$ with maximum $<i$. If this template
contributes to $G(e, zy_i)$, then $zy_i$ can be written as $u
y_i^{\pm1} v$ with $\vert u\vert\leq\vert t_1\vert$ and $\vert
v\vert\leq
\vert t_2\vert$. Denote by $\phi_i(z)$ the minimum of the quantities
$\vert u\vert+\vert v\vert$ over all decompositions $zy_i = uy_i^{\pm
1} v$,
we get $\vert t_1\vert+\vert t_2\vert \geq\phi_i(z)$. In particular,
$\vert t_1\vert\geq\phi_i(z)/2$ or $\vert t_2\vert \geq\phi
_i(z)/2$. It
follows that
\[
\Sigma_4 \leq2 \biggl(\sum_{\max t_1 < i, \vert t_1\vert \geq
\phi
_i(z)/2}
\pi(t_1) \biggr) e^{-r_i n_i} \biggl( \sum
_{t_2} \pi(t_2) \biggr).
\]
The first sum is bounded by $Ce^{-s_i \phi_i(z)/2} \leq C e^{-r
\phi_i(z)/2}$ by~\eqref{eq:somme_petite}, and the last sum is
uniformly bounded. Hence,
\[
\Sigma_4 \leq C e^{-r\phi_i(z)/2} e^{-r_i n_i}.
\]

To conclude, we have to show that $\phi_i(z)$ is large for infinitely
many values of~$i$, if $z$ is far away from $e$. Let $A>0$, let us
denote by $B_i$ the set of $z$ that can be written as $u y_i^{\pm1}
v y_i^{-1}$ for some $u$ and $v$ with $\vert u\vert+\vert v\vert \leq
A$. The
set $B_i$ is finite, with cardinality at most $f(A)=2 (\Card
B(e,A))^2$. If $z\notin B_i$, it satisfies $\phi_i(z) > A$ by
definition. The points with $\limsup\phi_i(z) \leq A$ belong to
$\bigcup_n \bigcap_{i>n}B_i$. This is an increasing union of sets of
cardinality at most $f(A)$, hence it has cardinality at most $f(A)$.
This shows that, apart from finitely many exceptions, $\limsup
\phi_i(z)>A$; hence, $\Sigma_4 \leq C e^{-r A/2} e^{-r_i n_i}$ for
infinitely many $i$'s.
\end{pf*}

Let us fix a point $z$ away from the origin, so that $\Phi(z)$ is
suitably small (how small will be seen later in the proof). We now
define the measures $\mu_i$ for odd $i$. If $i$ is large enough,
$zy_i \in B(e, n_i)$ thanks to~\eqref{eq:marge}. For those $i$'s, let
\[
\mu_i = \tfrac{1}{4}( \delta_{y_i} +
\delta_{zy_i} + \delta _{y_i^{-1}} + \delta_{(z y_i)^{-1}}).
\]
The choice of $\mu_i$ for smaller $i$ is not relevant (take, e.g., $\mu_i=\delta_e$).

If $i$ is large and odd, Lemma~\ref{lem:ineg_i} gives $G(e, zy_i)
\leq Ce^{-r_i n_i} + C e^{-s_i \vert zy_i\vert}$.
By~\eqref{eq:sizxi_negl}, the second term is negligible with respect
to the first one. Hence, $G(e, zy_i) \leq C e^{-r_i n_i}$. In the
same way $G(e, y_i) \leq C e^{-r_i n_i}$.

The Green function $G'=G_{\mu'}$ is bounded by $G=G_\mu$. For $i$
large and odd, we obtain $G'(e, zy_i) \leq C e^{-r_i n_i}$ and
$G'(e,y_i) \leq C e^{-r_i n_i}$. As it is possible to jump directly
from $e$ to $zy_i$ or $y_i$ with weight $\mu(\Gamma)^{-1}e^{-r_i
n_i}/4$, corresponding lower bounds hold. In particular, there exists
a constant $C_0$ such that, for $i$ large and odd,
\[
\frac{G'(e, zy_i)}{G'(e,y_i)} \in\bigl[C_0^{-1}, C_0
\bigr].
\]

For infinitely many (even) values of $i$, we have $G'(e, zy_i) \leq
\Phi(z) e^{-r_i n_i}$ by Lemma~\ref{lem:existe_Phi}. Moreover, $G'(e,
y_i) \geq C^{-1} e^{-r_i n_i}$ [since one can jump directly from $e$
to $y_i$ with weight $\mu(\Gamma)^{-1} e^{-r_i n_i}/2$]. Hence, for
those values of $i$, there exists a constant $C_1$ such that
\[
\frac{G'(e, zy_i)}{G'(e,y_i)} \leq C_1 \Phi(z).
\]

We can finally specify the choice of $z$: as $\Phi$ tends to $0$ at
infinity, we may choose $z$ such that $C_1\Phi(z) < C_0^{-1}$. The
previous estimates imply that
\[
\liminf_i \frac{G'(e, zy_i)}{G'(e,y_i)} \leq C_1 \Phi(z)
< C_0^{-1} \leq\limsup\frac{G'(e, zy_i)}{G'(e,y_i)}.
\]
In particular, the sequence $G'(e, zy_i)/G'(e, y_i)$ does not
converge when $i$ tends to infinity. Equivalently,
$G'(z^{-1},y_i)/G'(e,y_i)$ does not converge. This completes the
proof of Theorem~\ref{thmm:pathological}. %\qed

%s4 #&#
\section{Positive results in hyperbolic groups}
\label{sec:positive}

%s4.1 #&#
\subsection{Preliminaries}

A hyperbolic group is a finitely generated group in which geodesic
triangles are $\delta$-thin for some $\delta$, that is, each side of the
triangle is included in the $\delta$-neighborhood of the union of the
other sides. This notion is independent of the choice of the
generating system (albeit the constant $\delta$ does change with the
generating system). See, for instance,~\cite{ghys_hyperbolique}. This
essentially means that finite configurations of points in the group
resemble finite configurations of points in a tree---this intuition
is made precise by the following classical theorem.

%th4.1 #&#
\begin{thmm}
\label{thmm:tree_approx}
For any $n\in\N$ and $\delta>0$, there exists a constant
$C=C(n,\delta)$ with the following property. Consider a subset $A$ of
a $\delta$-hyperbolic group, of cardinality at most $n$. There exists
a map $\Phi$ from $A$ to a metric tree such that, for any $x,y\in A$,
\[
d(x,y)-C \leq d\bigl(\Phi(x), \Phi(y)\bigr) \leq d(x,y).
\]
\end{thmm}

Another intuition is that $\delta$-hyperbolic spaces resemble the
usual hyperbolic space $\Hbb^m$. Again, this is made precise by the
following theorem~\cite{bonk_schramm}. We will write $d_{\Hbb}$ for
the hyperbolic distance in $\Hbb^m$, and $\vert x\vert_{\Hbb} =
d_{\Hbb}(x, O)$ where $O$ is a fixed reference point in $\Hbb^m$.

%th4.2 #&#
\begin{thmm}
\label{thmm:bonk_schramm}
Consider a hyperbolic group $\Gamma$. If $m$ is large enough, there
exist a mapping $\Psi\dvtx \Gamma\to\Hbb^m$ and $\lambda>0$,
$C>0$ such
that, for all $x,y\in\Gamma$,
\[
\bigl\vert\lambda d_{\Hbb}\bigl(\Psi(x), \Psi(y)\bigr) - d(x,y)\bigr\vert\leq C.
\]
\end{thmm}

Ancona's original strategy~\cite{ancona2} to prove Ancona
inequalities~\eqref{eq:ancona} for finitely supported measures, based
on a subtle induction, is apparently difficult to extend to measures
with infinite support. We will rather rely on the strategy
of~\cite{gouezel_lalley}, and in particular on the following lemma
(see the proofs of Theorems~4.1 and~4.3 in~\cite{gouezel_lalley}). We
recall that the relative Green function $G(x,y;\Omega)$ has been
defined in Section~\ref{sec:green_function}.

%de4.3 #&#
\begin{definition}
Let $\mu$ be an admissible measure with finite Green function on a
hyperbolic group. It satisfies \emph{pre-Ancona inequalities} if, for
all $K>0$, there exists $n_0$ such that, for all $n\geq n_0$, for all
points $x,y,z$ on a geodesic segment (in this order) with $d(x,y)\in
[n,100n]$ and $d(y,z)\in[n,100 n]$, one has $G(x,z; B(y,n)^c) \leq
K^{-n}$.
\end{definition}

%le4.4 #&#
\begin{lem}
\label{lem:GL}
Let $\mu$ be an admissible measure on a hyperbolic group. Assume that
$\mu$ satisfies pre-Ancona inequalities. Then it satisfies Ancona
inequalities~\eqref{eq:ancona}.
\end{lem}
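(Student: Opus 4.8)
The plan is to deduce Ancona inequalities from pre-Ancona inequalities by an iteration/barrier argument along the geodesic. Fix $x,y,z$ on a geodesic in this order; we want $G(x,z) \leq C\,G(x,y)\,G(y,z)$, the reverse inequality being free by~\eqref{eq:super_mult}. The first reduction is the standard one: it suffices to treat the case $d(x,y)$ and $d(y,z)$ both large (say $\geq$ some fixed $n_1$), since for bounded separation Harnack~\eqref{eq_harnack} absorbs everything into the constant. So I would place a sequence of ``barrier spheres'' $S_j = S(w_j, n_j)$ centered at points $w_j$ marching along the geodesic from $x$ towards $z$, with radii $n_j$ chosen so that consecutive centers are at distance comparable to $n_j$ (in $[n_j, 100 n_j]$, matching the hypothesis of pre-Ancona), and so that the spheres are thick enough that every path from $x$ to $z$ that stays ``near'' the geodesic must cross each sphere. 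The $K^{-n}$ decay in the pre-Ancona inequality is the gain that makes the geometric series over crossings converge.

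Concretely, I would first record the decomposition identity from Section~\ref{sec:green_function}: for a set $A$ with $x,z \notin A$,
\begin{equation*}
G(x,z) = G(x,z;A^c) + \sum_{a\in A} G(x,a;A^c)\,G(a,z).
\end{equation*}
Taking $A = B(y,n)$ for suitable $n$, the first term $G(x,z;B(y,n)^c)$ is $\leq K^{-n}$ by pre-Ancona and is therefore utterly negligible compared to $G(x,y)G(y,z)$ (which is bounded below by $C^{-1}(C^{-d(x,y)})(C^{-d(y,z)})$ via the single near-geodesic path, hence only \emph{exponentially} small, with a rate we can beat by choosing $K$ large relative to the word-metric constant $C$ and to the chosen scale $n \sim d(x,y) \wedge d(y,z)$). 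For the main term $\sum_{a\in B(y,n)} G(x,a;B(y,n)^c) G(a,z)$, I would note $G(x,a;B(y,n)^c)\le G(x,a)\le C^{n} G(x,y)$ by Harnack since $a\in B(y,n)$, and similarly $G(a,z)\le C^{n} G(y,z)$; summing over the at most $|B(e,n)|\le C^n$ points of $B(y,n)$ gives $G(x,z)\le C^{3n} G(x,y)G(y,z) + K^{-n}$. This is \emph{not} good enough as stated because $C^{3n}$ blows up — so the single-ball argument must be replaced by the iterated-barrier version where $n$ is a \emph{fixed} scale (not growing with $d(x,y)$) and one chains $O(d(x,y)/n)$ barriers, at each step peeling off one factor of (bounded constant) per barrier and accumulating the $K^{-n}$ errors, which form a convergent geometric-type sum because each barrier passage also forces an extra $C^{-n}$-type multiplicative loss of Green mass that the pre-Ancona bound over-compensates. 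The careful bookkeeping — showing the product of per-barrier constants stays bounded while the error terms stay summable — is exactly the content of the proofs of Theorems~4.1 and~4.3 of~\cite{gouezel_lalley}, which I would follow.

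The step I expect to be the main obstacle is the combinatorial/geometric control of how paths interact with the barrier family: one must ensure that every path from $x$ to $z$ either crosses the next barrier sphere $S_j$ in its ``intended'' region near the geodesic — so that the recursion on $G(\cdot, z)$ applies with the right geometry — or else is captured by a ``stray'' term $G(\cdot,\cdot; S_j^{\text{far}})$ that is controlled directly by pre-Ancona. Making the radii $n_j$ and centers $w_j$ simultaneously satisfy: (i) the $[n_j,100 n_j]$ pairwise-distance constraint so pre-Ancona applies; (ii) thickness large compared to the hyperbolicity constant $\delta$ so that geodesics genuinely pass through the ``core'' of each $S_j$; and (iii) summability of the resulting error series — is the delicate part, and it is where one uses $\delta$-hyperbolicity (via the tree-approximation Theorem~\ref{thm:tree_approx}) to guarantee that near-geodesic excursions really are trapped by the barriers. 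Once the scales are fixed appropriately, the rest is the geometric summation sketched above.
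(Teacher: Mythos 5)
Your general strategy is the right one and matches the paper's: use the decomposition identity to split $G(x,z)$ at a barrier $A$, control $G(x,z;A^c)$ by pre-Ancona, and iterate along a string of barriers placed along $[x,z]$, with the bookkeeping deferred to the proofs of Theorems~4.1 and~4.3 in Gou\"ezel--Lalley. However, there is a genuine gap in the structural description: you claim the iterated version should use barriers at a \emph{fixed} scale $n$, chaining $O(d(x,y)/n)$ of them. This cannot work, and in fact contradicts the hypothesis of the pre-Ancona inequality itself. That hypothesis requires the ball radius $n$ to satisfy $d(x,y),d(y,z)\in[n,100n]$; in the iterated version, where one peels off a barrier centered at a point $w_j$ and recurses on the remaining shorter geodesic, the radius of the $j$-th bead must therefore be comparable to $d(w_j, y)$ (equivalently, to the length of the remaining geodesic segment). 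Thus the beads must shrink geometrically as the recursion approaches $y$, and grow as one moves toward $x$ or $z$. This is exactly what the paper states: ``the size of a bead being proportional to its distance to $y$.'' With a fixed scale, pre-Ancona simply does not apply at barriers far from the midpoint.

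This is not a cosmetic point: the geometrically varying radii $n_j$ are precisely what make the error terms $K^{-n_j}$ form a convergent series, and what keep the product of per-barrier Harnack constants under control (the cumulative Harnack cost is dominated by a geometric sum of the $n_j$'s, not by $d(x,y)$ times a fixed constant as your fixed-scale scheme would give). Your own sketch already exhibits the problem: you correctly note that the single-barrier bound $C^{3n}G(x,y)G(y,z)+K^{-n}$ blows up, but the fix you propose — ``each barrier passage also forces an extra $C^{-n}$-type multiplicative loss of Green mass that the pre-Ancona bound over-compensates'' — does not correspond to any identity; the decomposition gives a sum over barrier points, not a product with a controlled loss. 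The correct mechanism is instead that most of the Green mass is eventually funnelled into a bounded-size bead at distance $O(1)$ from $y$, after which a single application of Harnack gives $G(x,y)G(y,z)$ up to a universal constant. Aside from this misidentification of the bead geometry, the remaining ingredients you cite — the decomposition identity, the role of $\delta$-hyperbolicity in trapping near-geodesic paths, the reduction to large separation — are correct and do appear in the reference.
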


This lemma justifies the name ``pre-Ancona inequalities.'' It is
proved in~\cite{gouezel_lalley} as follows. Assume that $x,y,z$ are
given along a geodesic, and one wants to prove that $G(x,z)\leq
CG(x,y)G(y,z)$. One constructs a string of beads along a geodesic
segment $[x,z]$, the size of a bead being proportional to its
distance to $y$. Then, using pre-Ancona inequalities, one shows
inductively that the weight of trajectories avoiding any bead is
comparatively small. It follows that most weight comes from
trajectories passing in a bead within distance $O(1)$ of $y$, as
desired.

To prove Ancona inequalities, our strategy will always be to show
that pre-Ancona inequalities are satisfied.

%s4.2 #&#
\subsection{Ancona inequalities for measures satisfying
\texorpdfstring{$\mathrm{Anc}_*$}{Anc*}}

In this paragraph, we prove Theorem~\ref{thmm:old_ancona}. Consider an
admissible measure $\mu$ on a hyperbolic group, with superexponential
tails and satisfying $\mathrm{Anc}_*$, we will show that it satisfies
pre-Ancona
inequalities. We have to show that, for any points $x,y,z$ on a
geodesic in this order with $n\leq d(x,y), d(y,z) \leq100n$, the
Green function $G(x,z; B(y,n)^c)$ decays superexponentially fast in
terms of $n$.

We express things in terms of operators. Let $P=P_\mu$ be the
operator associated to $\mu$. We decompose $P$ as $A_n+B_n$ where
$A_n$ corresponds to jumps of size at most $n/2$, and $B_n$ to the
bigger jumps. On $\ell^2$, they satisfy $\Vert A_n\Vert \leq\Vert
P\Vert
\leq\mu(\Gamma)$ (which is finite since $\mu$ has well-defined
tails), and $\Vert B_n\Vert$ decays superexponentially fast in terms of
$n$ by assumption.

Let us fix a constant $C_0$. The Green function $G(x,z)$ is the sum
of the weights $\pi(\tau)$ of all paths $\tau$ from $x$ to $z$. The
contribution of paths with length at most $C_0 n$ is
\begin{eqnarray*}
\sum_{k=0}^{C_0 n} P^k
\delta_z(x) & = &\sum_{k=0}^{C_0 n}
(A_n+B_n)^k \delta_z(x) \leq
\sum_{k=0}^{C_0 n}\bigl \Vert(A_n+B_n)^k
\bigr\Vert
\\
& \leq&\sum_{k=0}^{C_0 n} \sum
_{\ell=0}^k \pmatrix{k
\cr
\ell} \Vert A_n
\Vert^\ell\Vert B_n\Vert^{k-\ell}.
\end{eqnarray*}
By $\mathrm{Anc}_*$, there exists $r>1$ such that the measure $r\mu$
has a finite
Green function. The contribution to $G(x,z)$ of paths longer than
$C_0 n$ is
\[
\sum_{k>C_0 n} p^k(x,z) \leq
r^{-C_0 n} \sum_{k>C_0 n} r^k
p^k(x,z) \leq r^{-C_0 n} G_{r \mu}(x,z).
\]
The quantity $G_{r\mu}(x,z)$ grows at most exponentially in terms of
$n$, thanks to Harnack inequality~\eqref{eq_harnack} and since
$d(x,z) \leq200 n$. Hence, we obtain from some constant $D_0$
independent of $C_0$
%
%e4.1 #&#
\begin{equation}
\label{eq:G} G(x,z) \leq\sum_{k=0}^{C_0 n}
\sum_{\ell=0}^k \pmatrix{k
\cr
\ell} \Vert
A_n\Vert^\ell\Vert B_n\Vert^{k-\ell} +
r^{-C_0 n} D_0^n.
\end{equation}

Let us now estimate $G(x,z; B(y,n)^c)$. Consider a trajectory from
$x$ to $z$ outside of $B(y,n)$ with jumps bounded by $n/2$. Putting
geodesics between the successive points of the trajectory, one
obtains a path from $x$ to $z$ avoiding $B(y,n/2)$. This path is
exponentially long (since this is the case in hyperbolic space, to
which the group can be compared thanks to
Theorem~\ref{thmm:bonk_schramm}). Hence, the number of jumps is at
least $C e^{\alpha n} /n\geq C e^{\beta n}$. It follows that, among
trajectories of length at most $C_0 n$, it is necessary to have a
jump larger than $n/2$ if $n$ is large enough. This shows that,
in~\eqref{eq:G}, the terms with $k=\ell$ (i.e., coming from $A_n^k$)
do not contribute to $G(x,z; B(y,n)^c)$. This equation gives
\[
G\bigl(x,z; B(y,n)^c\bigr) \leq\sum_{k=0}^{C_0 n}
\sum_{\ell=0}^{k-1} \pmatrix{k
\cr
\ell} \Vert
A_n\Vert^\ell\Vert B_n\Vert^{k-\ell} +
r^{-C_0 n} D_0^n.
\]
As $k-\ell\geq1$, we can bound $\Vert B_n\Vert^{k-\ell}$ with
$\Vert B_n\Vert$, yielding
\begin{eqnarray*}
G\bigl(x,z; B(y,n)^c\bigr) &\leq&\Vert B_n\Vert \sum
_{k=0}^{C_0 n} \sum
_{\ell=0}^{k-1} \pmatrix {k
\cr
\ell} \Vert A_n
\Vert^\ell + r^{-C_0 n} D_0^n
\\
& \leq&\Vert B_n\Vert \sum_{k=0}^{C_0 n}\bigl(
\Vert A_n\Vert+1\bigr)^k + r^{-C_0
n}
D_0^n
\\
& \leq&\Vert B_n\Vert \sum_{k=0}^{C_0 n}\bigl(
\Vert P\Vert+1\bigr)^k + r^{-C_0
n} D_0^n
\\
& \leq&\Vert B_n\Vert D_1^{C_0 n} +
r^{-C_0 n} D_0^n,
\end{eqnarray*}
for some constant $D_1$ independent of $C_0$.

Let us complete the proof. Fix $K>1$, we want to show that
$G(x,z;\break B(y,n)^c) \leq2K^{-n}$ if $n$ is large enough. First, we
choose $C_0$ with $r^{-C_0} D_0 < K^{-1}$, so that the second term in
the previous equation is bounded by $K^{-n}$. Then, as $\Vert B_n\Vert$
decays superexponentially, we have $\Vert B_n\Vert D_1^{C_0 n} \leq
K^{-n}$ if $n$ is large enough. %\qed

%re4.5 #&#
\begin{rmk}
If the measure $\mu$ has finite support, the proof simplifies
drastically since there is no trajectory from $x$ to $z$ with length
at most $C_0 n$ avoiding $B(y,n)$. Hence, one gets a very simple
proof of Ancona's original results~\cite{ancona2} (most of the
complexity is in fact hidden in Lemma~\ref{lem:GL}).
\end{rmk}

%s4.3 #&#
\subsection{Ancona inequalities in the free group}
\label{subsec:free}

In this paragraph, we prove the second item of
Theorem~\ref{thmm:new_ancona}: in a free group, an admissible measure
$\mu$ with superexponential tails and finite Green function satisfies
Ancona inequalities. Since Ancona inequalities for finitely supported
measures are trivial in the free group, the only difficulty comes
from long jumps. The trick we will devise to handle those long jumps
(replacing a trajectory involving a long jump by a longer trajectory
with short jumps) will be used several times in the rest of the
paper.

By Lemma~\ref{lem:GL}, it suffices to show that $\mu$ satisfies
pre-Ancona inequalities. Consider three points $x,y,z$ on a geodesic
in this order with $n\leq d(x,y), d(y,z)\leq100n$, we want to show
that $G(x,z; B(y,n)^c)$ is superexponentially small. We may assume
without loss of generality that $y=e$. We will first give the proof
assuming for simplicity that $\mu$ gives positive mass to every
generator of the group.

Denote by $Z_0,\ldots, Z_N$ the finitely many connected components of
$\Gamma- B(e, n/2)$, with $x\in Z_0$ and $z\in Z_N$. Let also $A_i =
Z_i\cap(\Gamma- B(e,n))$.

Consider a trajectory $\tau=(x_0=x, x_1,\ldots, x_{k-1}, x_k=z)$ of
the random walk from $x$ to $z$, avoiding $B(e,n)$. It cannot stay
forever in $A_0$, let us say that the first jump outside of $A_0$ is
from $x_i$ to $x_{i+1}$. We associate to $\tau$ a modified trajectory
$m(\tau)$ (again from $x$ to $z$) as follows. Let $a$ and $b$ be
different elements in the support of $\mu$. Let $\tau_i$ be a
geodesic from $x_i$ to $e$, with length $n_i = \vert x_i\vert$, and let
$\tau_{i+1}$ be a geodesic from $e$ to $x_{i+1}$, with length
$n_{i+1}=\vert x_{i+1}\vert$. We let
%
%e4.2 #&#
\begin{eqnarray}
\label{eq:def_m_tau}&& m(\tau) = \bigl(x_0,\ldots, x_{i-1}, (
\tau_i), a,a^{-1},\ldots, a, a^{-1},
\nonumber
\\[-8pt]
\\[-8pt]
\nonumber
&&\hspace*{41pt}b,b^{-1},\ldots, b,b^{-1}, (\tau_{i+1}),
x_{i+2},\ldots, x_k=z\bigr),
%\nonumber
\end{eqnarray}
where we put $n_i$ copies of $a, a^{-1}$ and $n_{i+1}$ copies of $b,
b^{-1}$. The interest of this insertion is that the map $\tau\to
m(\tau)$ is one-to-one: if one knows $m(\tau)$, then the number of
$a,a^{-1}$ following the first return to $e$ gives $n_i$. In the same
way, one can determine $n_{i+1}$. Removing the pieces of length
$n_i-1$ before the first return to $e$, and $n_{i+1}-1$ after the
last return to $e$, one recovers the initial trajectory $\tau$.

To get $m(\tau)$,\vspace*{1pt} we removed a big jump of $\tau$, and we added
$3(n_i+n_{i+1})$ jumps of length $1$ (with weight uniformly bounded
from below, by a constant $C_0^{-1}$). We obtain
\[
\pi\bigl(m(\tau)\bigr) \geq\pi(\tau) C_0^{-3(n_i+n_{i+1})} /
\pi(x_i, x_{i+1}).
\]
For any constant $K$, there exists $C_K$ such that $\pi(e,u)=\mu(u)
\leq C_K K^{-\vert u\vert}$ since $\mu$ has superexponential tails.
Hence, we get
\[
\pi(\tau) \leq\pi\bigl(m(\tau)\bigr) C_0^{3(n_i+n_{i+1})}
C_K K^{- d(x_i, x_{i+1})}.
\]
Since $x_i$ and $x_{i+1}$ belong to different connected components of
$\Gamma- B(e,n/2)$, we have $d(x_i, x_{i+1}) \geq
\vert x_i\vert+\vert x_{i+1}\vert-n$. As $\vert x_i\vert\geq n$ and
$\vert x_{i+1}\vert\geq n$, this gives $d(x_i, x_{i+1}) \geq
(\vert x_i\vert+\vert x_{i+1}\vert)/2=(n_i+n_{i+1})/2$. We get
\[
\pi(\tau) \leq\pi\bigl(m(\tau)\bigr) C_0^{3(n_i+n_{i+1})}
C_K K^{- (n_i +
n_{i+1})/2}.
\]
If $K$ is large enough so that $C_0^3 K^{-1/4} \leq1$, we obtain
\[
\pi(\tau) \leq\pi\bigl(m(\tau)\bigr) C_K K^{- (n_i + n_{i+1})/4} \leq\pi
\bigl(m(\tau)\bigr)C_K K^{-n/2}.
\]
The map $\tau\mapsto m(\tau)$ is one-to-one. Summing over all
trajectories from $x$ to $z$ outside of $B(e,n)$, we obtain
\[
G\bigl(x,z; B(e,n)^c\bigr) \leq C_K K^{-n/2}
G(x,z).
\]
Since $d(x,z)\leq200 n$, we have $G(x,z) \leq C^n$ by Harnack
inequalities~\eqref{eq_harnack}. As $K$ can be arbitrarily large,
this shows that $G(x,z; B(e,n)^c)$ is smaller than any exponential,
as desired. This completes the proof of pre-Ancona inequalities when
$\mu$ gives positive mass to all generators.

In the general case, one has to tweak the definition of the modified
trajectory $m(\tau)$ to ensure that $m(\tau)$ has positive weight,
while retaining the injectivity of the map $\tau\mapsto m(\tau)$.
One can, for instance, proceed as follows. To each generator $s$, let
us associate a path $\sigma_s$ from $e$ to $s$ with $\pi(\sigma_s)>0$---such
a path exists since $\mu$ is admissible. Then, in the
definition of $m(\tau)$, one replaces the geodesic $\tau_i=s_1 \cdots
s_{n_i}$ with the concatenation $\tilde\tau_i$ of the paths
$\sigma_{s_1}\cdots\sigma_{s_{n_i}}$. In the same way, one replaces
$\tau_{i+1}$ with the corresponding path $\tilde\tau_{i+1}$. Note
that $\pi(\tilde\tau_i) \geq C_1^{-n_i}$ and $\pi(\tilde\tau_{i+1})
\geq C_1^{-n_{i+1}}$ for some constant $C_1$, since the lengths of
$\tilde\tau_i$ and $\tilde\tau_{i+1}$ are bounded, respectively, by
$Cn_i$ and $Cn_{i+1}$.

A problem that may appear with this construction is that the first
return to $e$ in $m(\tau)$ might happen before the end of $\tilde
\tau_i$, so that the reconstitution of $\tau$ from $m(\tau)$ is
problematic. To avoid this problem, one may add a loop $\gamma$ from
$e$ to itself, with $\pi(\gamma)>0$, that does not appear when one
concatenates paths $\sigma_s$ along a geodesic segment. In the end,
one chooses for $m(\tau)$ the trajectory
%
%e4.3 #&#
\begin{eqnarray}
\label{eq:def_m_tau_complicated}&& \bigl(x_0,\ldots, x_{i-1}, (\tilde
\tau_i), (\gamma), (\alpha),\ldots, (\alpha), (\beta),\ldots, (
\beta),
\nonumber
\\[-8pt]
\\[-8pt]
\nonumber
&&\hspace*{97pt}(\gamma), (\tilde\tau_{i+1}), x_{i+2},\ldots,
x_k=z\bigr),
\end{eqnarray}
where $\alpha$ and $\beta$ are two fixed distinct loops from $e$ to
$e$ with positive weight, and one puts $\vert\tilde\tau_i\vert$ terms
$\alpha$ and $\vert\tilde\tau_{i+1}\vert$ terms $\beta$. By
construction, $\tau\mapsto m(\tau)$ is one-to-one and $\pi(m(\tau))
\geq\pi(\tau) C_2^{n_i+n_{i+1}}/\pi(x_i, x_{i+1})$ for some constant
$C_2$. The rest of the argument goes through. %\qed

%s4.4 #&#
\subsection{Ancona inequalities for symmetric measures}
\label{subsec:preAncona_symmetric}

In this paragraph, we prove the first item of
Theorem~\ref{thmm:new_ancona}: in a hyperbolic group, a symmetric
admissible measure $\mu$ with superexponential tails and finite Green
function satisfies Ancona inequalities. By Lemma~\ref{lem:GL}, it
suffices to show that it satisfies pre-Ancona inequalities. Consider
three points $x,y,z$ on a geodesic in this order with $n\leq d(x,y),
d(y,z)\leq100 n$; we want to show that $G(x,z; B(y,n)^c)$ is
superexponentially small. We may assume without loss of generality
that $y=e$.

The proof follows the strategy in~\cite{gouezel_higherancona}, Theorem~2.3: we will construct several barriers so
that most trajectories from $x$ to $z$ will visit them. The
construction is made in $\Hbb^m$, using an approximate embedding
$\Psi$ of $\Gamma$ inside $\Hbb=\Hbb^m$ given by
Theorem~\ref{thmm:bonk_schramm}. We will think of $\Hbb^m$ using the
model of the unit ball in $\R^m$, hence its boundary is identified
with the unit sphere $S^{m-1}$. We denote by $O$ the center of the
unit ball in $\R^m$. Changing the generators of the group if
necessary, we may assume that $\mu$ gives positive mass to all of
them. We will need to choose at some point in the proof some very
small $\epsilon$, and we will denote by $C$ a generic constant that
does not depend on $\epsilon$.

We will use the following easy lemma of hyperbolic geometry.

%le4.6 #&#
\begin{lem}
\label{lem:angle_separe}
There exist $\alpha>0$ and $C>0$ with the following property: for any
points $a$ and $b$ in a ball $B_{\Hbb}(u, \vert u\vert_{\Hbb}/9)$ of
$\Hbb^m$, the angle between $[Oa]$ and $[Ob]$ is at most $C
e^{-\alpha\vert u\vert_{\Hbb}}$.
\end{lem}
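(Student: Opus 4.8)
The plan is to reduce the statement to the hyperbolic law of cosines in the totally geodesic plane spanned by $O$, $a$ and $b$. Write $\lgth{\cdot}_{\Hbb} = d_{\Hbb}(O,\cdot)$. Since $a,b\in B_{\Hbb}(u,\lgth{u}_{\Hbb}/9)$, the triangle inequality gives $\lgth{a}_{\Hbb},\lgth{b}_{\Hbb}\geq \tfrac{8}{9}\lgth{u}_{\Hbb}$ and $d_{\Hbb}(a,b)\leq \tfrac{2}{9}\lgth{u}_{\Hbb}$; in particular $O\notin B_{\Hbb}(u,\lgth{u}_{\Hbb}/9)$ once $\lgth{u}_{\Hbb}>0$, so $[Oa]$ and $[Ob]$ are non-degenerate geodesic segments. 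In the ball model, geodesics through $O$ are Euclidean radii and the metric is conformal at $O$, so the Euclidean angle between $[Oa]$ and $[Ob]$ equals the hyperbolic angle $\theta$ at the vertex $O$ of the geodesic triangle $Oab$.

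Next I would apply the hyperbolic law of cosines in the $2$-dimensional totally geodesic subspace containing $O$, $a$, $b$, namely
  \begin{equation*}
  \cosh d_{\Hbb}(a,b) = \cosh\lgth{a}_{\Hbb}\cosh\lgth{b}_{\Hbb} - \sinh\lgth{a}_{\Hbb}\sinh\lgth{b}_{\Hbb}\cos\theta,
  \end{equation*}
which, using $\cosh A\cosh B - \sinh A\sinh B = \cosh(A-B)$, rearranges to
  \begin{equation*}
  1 - \cos\theta = \frac{\cosh d_{\Hbb}(a,b) - \cosh(\lgth{a}_{\Hbb}-\lgth{b}_{\Hbb})}{\sinh\lgth{a}_{\Hbb}\sinh\lgth{b}_{\Hbb}}.
  \end{equation*}
The numerator is nonnegative since $d_{\Hbb}(a,b)\geq |\lgth{a}_{\Hbb}-\lgth{b}_{\Hbb}|$, and it is bounded above by $\cosh d_{\Hbb}(a,b)\leq e^{d_{\Hbb}(a,b)}\leq e^{2\lgth{u}_{\Hbb}/9}$. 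For $\lgth{u}_{\Hbb}$ large the denominator is at least $c\,e^{\lgth{a}_{\Hbb}+\lgth{b}_{\Hbb}}\geq c\,e^{16\lgth{u}_{\Hbb}/9}$ for a universal $c>0$. Hence $1-\cos\theta \leq C e^{-14\lgth{u}_{\Hbb}/9}$.

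Finally, from $1-\cos\theta = 2\sin^2(\theta/2)\geq 2\theta^2/\pi^2$ on $[0,\pi]$ one gets $\theta\leq Ce^{-7\lgth{u}_{\Hbb}/9}$, so the lemma holds with $\alpha = 7/9$; the bounded range of $\lgth{u}_{\Hbb}$ where the denominator estimate is not yet valid is dispatched trivially using $\theta\leq\pi$ after enlarging $C$. I do not expect a real obstacle: the only points needing a word of care are the identification of the angle between the segments $[Oa]$, $[Ob]$ with the vertex angle of the geodesic triangle (immediate from the ball-model description of geodesics through the center), and the fact that the radius fraction $1/9$ is chosen precisely so that the exponent $16/9 - 2/9 = 14/9$ is positive.
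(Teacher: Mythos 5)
The paper states Lemma~\ref{lem:angle_separe} as an ``easy lemma of hyperbolic geometry'' and gives no proof at all, so there is no in-paper argument to compare against. Your proposed proof is correct and fills this gap cleanly: the reduction to the two-dimensional totally geodesic plane through $O$, $a$, $b$ is standard, the hyperbolic law of cosines and the rearrangement via $\cosh A\cosh B-\sinh A\sinh B=\cosh(A-B)$ are exact, the bounds $\cosh d_{\Hbb}(a,b)\leq e^{2\lgth{u}_{\Hbb}/9}$ and $\sinh\lgth{a}_{\Hbb}\sinh\lgth{b}_{\Hbb}\geq c\,e^{16\lgth{u}_{\Hbb}/9}$ (valid once $\lgth{u}_{\Hbb}$ is bounded below) are both correct, and the passage from $1-\cos\theta$ to $\theta$ via $1-\cos\theta=2\sin^{2}(\theta/2)\geq 2\theta^{2}/\pi^{2}$ is fine. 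The exponent $\alpha=7/9$ you extract is better than what the statement demands, and you correctly handle the small-$\lgth{u}_{\Hbb}$ regime by enlarging $C$; this is exactly the kind of explicit computation the author presumably had in mind when omitting the proof.
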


The hyperbolic geodesic from $\Psi(x)$ to $\Psi(z)$ can be extended
biinfinitely. Composing $\Psi$ with a hyperbolic isometry, we can
assume that the center $O$ of the unit ball in $\R^m$ belongs to this
geodesic, and that $\Psi(e)$ is at a bounded distance of $O$. Let
$\xi$ denote the limit in negative time of this geodesic.

To an angle $\theta\in(0,\pi)$, we associate the union $Y(\theta)$
of all semiinfinite geodesics $[O\zeta)$ (with $\zeta\in S^{m-1}$)
making an angle $\theta$ with $[O\xi)$ (its boundary at infinity is
the set of points of $S^{m-1}$ at distance $\theta$ of $\xi$). This
is the boundary of a cone based at $O$. Let $Z(\theta)$ be the union
of all hyperbolic balls $B_{\Hbb}(u, \vert u\vert_{\Hbb}/10)$ for
$u\in
Y(\theta)$. This is a thickening of $Y(\theta)$, thicker and thicker
close to infinity. It cuts $\Hbb^m$ into two connected components.

%le4.7 #&#
\begin{lem}
\label{lem:taille_saut}
If $u$ and $v$ are two points in the two components of $\Hbb^m -
Z(\theta)$, one has
\[
d_{\Hbb}(u,v) \geq\bigl(\vert u\vert_{\Hbb} + \vert v
\vert_{\Hbb}\bigr)/11.
\]
\end{lem}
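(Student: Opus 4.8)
The plan is to prove Lemma~\ref{lem:taille_saut} by a direct estimate in $\Hbb^m$, using the fact that $Z(\theta)$ is a thickening of the cone boundary $Y(\theta)$ whose thickness at a point $u\in Y(\theta)$ is a fixed fraction (one tenth) of $\lgth{u}_\Hbb$. First I would observe that a hyperbolic geodesic from $u$ to $v$ must cross $Z(\theta)$, since $u$ and $v$ lie in distinct connected components of $\Hbb^m - Z(\theta)$. Pick a point $w$ on $[uv]\cap Z(\theta)$. By definition of $Z(\theta)$, there is a point $p\in Y(\theta)$ with $w\in B_\Hbb(p,\lgth{p}_\Hbb/10)$, so $d_\Hbb(w,p)\leq \lgth{p}_\Hbb/10$, which by the triangle inequality gives $\lgth{w}_\Hbb \geq \lgth{p}_\Hbb - \lgth{p}_\Hbb/10 = 9\lgth{p}_\Hbb/10$, i.e.\ $\lgth{p}_\Hbb \leq (10/9)\lgth{w}_\Hbb$ and hence $d_\Hbb(w,p)\leq \lgth{w}_\Hbb/9$.

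Next I would bound $d_\Hbb(u,v) = d_\Hbb(u,w) + d_\Hbb(w,v)$ from below by relating $d_\Hbb(u,w)$ to $\lgth{u}_\Hbb$ and $d_\Hbb(w,v)$ to $\lgth{v}_\Hbb$. The key point is that $u$ is outside the ball $B_\Hbb(p,\lgth{p}_\Hbb/10)$ while $w$ is inside it, so $d_\Hbb(u,w) \geq d_\Hbb(u,p) - d_\Hbb(w,p) \geq d_\Hbb(u,p) - \lgth{p}_\Hbb/10$. Combining $d_\Hbb(u,p)\geq \lgth{u}_\Hbb - \lgth{p}_\Hbb$ (wait---that is the wrong direction; rather I want $d_\Hbb(u,p) \geq \lgth{u}_\Hbb - \lgth{p}_\Hbb$ is useless when $\lgth{p}_\Hbb$ is large). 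The cleaner route: since $u\notin Z(\theta)$, $u$ is not in $B_\Hbb(p,\lgth{p}_\Hbb/10)$, so $d_\Hbb(u,p)\geq \lgth{p}_\Hbb/10$; then $d_\Hbb(u,w)\geq d_\Hbb(w,p)$ may fail, so instead I use that $O$, $w$ all sit with $\lgth{w}_\Hbb$ controlled, and estimate $d_\Hbb(u,w)\geq \lgth{u}_\Hbb - \lgth{w}_\Hbb$. Likewise $d_\Hbb(w,v)\geq \lgth{v}_\Hbb - \lgth{w}_\Hbb$. Adding, $d_\Hbb(u,v)\geq \lgth{u}_\Hbb + \lgth{v}_\Hbb - 2\lgth{w}_\Hbb$, which is only useful if $\lgth{w}_\Hbb$ is small compared to $\lgth{u}_\Hbb + \lgth{v}_\Hbb$. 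The right way to close the gap is to use that $w$ lies \emph{on} the geodesic $[uv]$, so $\lgth{w}_\Hbb \leq \max(\lgth{u}_\Hbb, \lgth{v}_\Hbb)$ up to an additive hyperbolicity constant, which is not quite enough either. So I would instead argue directly: $d_\Hbb(u,v) = d_\Hbb(u,w)+d_\Hbb(w,v) \geq (\lgth{u}_\Hbb - \lgth{w}_\Hbb) + (\lgth{v}_\Hbb - \lgth{w}_\Hbb)$, and separately $d_\Hbb(u,v)\geq d_\Hbb(u,w) + d_\Hbb(w,v) \geq$ something forcing $\lgth{w}_\Hbb$ to be small relative to the distances.

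A cleaner execution avoids $w$ entirely: since $u$ and $v$ are in different components of $\Hbb^m - Z(\theta)$, the geodesic $[uv]$ meets $Y(\theta)$ itself (by continuity of the radial projection, or because $Z(\theta)$ separates and is a neighborhood of $Y(\theta)$). More robustly: let $q$ be the point of $Y(\theta)$ realizing, up to the hyperbolicity constant, the crossing; since $[uv]$ must enter $B_\Hbb(q,\lgth{q}_\Hbb/10)$ for $[uv]$ to pass from one side of $Z(\theta)$ to the other, we get $d_\Hbb(u,q)\geq d_\Hbb(u, B_\Hbb(q,\lgth{q}_\Hbb/10)^{\text{entry}})$ --- this is getting tangled, so in the actual write-up I would simply invoke the CAT$(-1)$ (or CAT$(0)$) comparison: in $\Hbb^m$, for any point $q$ and any geodesic $[uv]$ entering $B_\Hbb(q,\lgth{q}_\Hbb/10)$ at some $w'$, one has $d_\Hbb(u,v)\geq d_\Hbb(u,q) + d_\Hbb(q,v) - 2\lgth{q}_\Hbb/10$, and then use $d_\Hbb(u,q)\geq \lgth{u}_\Hbb - \lgth{q}_\Hbb$, $d_\Hbb(q,v)\geq \lgth{v}_\Hbb - \lgth{q}_\Hbb$ together with the additional geometric fact (from $\delta$-hyperbolicity of $\Hbb^m$ and $q$ lying near the separating set) that $\lgth{q}_\Hbb \leq (1+o(1))\max(\lgth u_\Hbb,\lgth v_\Hbb)$.

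I expect the main obstacle to be exactly this bookkeeping of the radial distance $\lgth{q}_\Hbb$ of the crossing point: one must rule out that the geodesic $[uv]$ crosses $Z(\theta)$ very far out (where the thickness $\lgth{q}_\Hbb/10$ of $Z(\theta)$ is enormous), since a crossing far from the origin costs a lot. The resolution is that the crossing point $q$ on $[uv]$ satisfies $\lgth{q}_\Hbb \leq d_\Hbb(O,[uv]) + \tfrac12 d_\Hbb(u,v) + O(\delta) \leq \max(\lgth u_\Hbb, \lgth v_\Hbb) + O(\delta)$ since $O$ is at bounded distance from $\Psi(e)$ and in convex geometry a geodesic stays within $\max$ of the endpoint distances. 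Feeding $\lgth{q}_\Hbb \leq \lgth u_\Hbb + \lgth v_\Hbb$ (crudely) into the chain of inequalities and tracking the coefficient $2/10$ versus the coefficients on $\lgth u_\Hbb + \lgth v_\Hbb$ yields, after optimizing, a lower bound of the form $(\lgth u_\Hbb + \lgth v_\Hbb)/c$ with $c$ slightly above $10$; the statement absorbs this into the constant $11$. Since the lemma only claims the constant $11$ (with plenty of slack built into the $1/10$ thickening versus the required $1/11$), the estimate goes through with room to spare, and no additive constant is needed because $Z(\theta)$ is an honest neighborhood (thickness at least a fixed fraction of the radius everywhere).
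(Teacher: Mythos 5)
Your proposal circles the right ingredients but never assembles them, and the hand-waving at the end would not close the gap. The key step you are missing is purely algebraic and requires \emph{no} upper bound on the radial distance of the crossing point and \emph{no} $\delta$-hyperbolicity comparison or additive constants.

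Take the crossing point $w$ of the geodesic $[uv]$ with the cone $Y(\theta)$ itself (not merely with $Z(\theta)$); such a point exists because $Y(\theta)$ separates $\Hbb^m$ and $u,v$ lie in different components. Since $w$ is on the geodesic, $d_\Hbb(u,v)=d_\Hbb(u,w)+d_\Hbb(w,v)$. Now \emph{two} lower bounds on $d_\Hbb(u,w)$ hold simultaneously: first, since $w\in Y(\theta)$, the ball $B_\Hbb(w,\lgth{w}_\Hbb/10)$ is contained in $Z(\theta)$, and $u\notin Z(\theta)$, so $d_\Hbb(u,w)\geq \lgth{w}_\Hbb/10$; second, trivially $d_\Hbb(u,w)\geq \lgth{u}_\Hbb-\lgth{w}_\Hbb$. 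Taking the convex combination with weight $t=10/11$ on the first bound and $1/11$ on the second makes the $\lgth{w}_\Hbb$ terms cancel, leaving $d_\Hbb(u,w)\geq \lgth{u}_\Hbb/11$. The same holds for $v$, and adding gives the lemma. Your proposal instead tries to bound $\lgth{w}_\Hbb$ from above (via ``$\lgth{w}_\Hbb\leq\max(\lgth u_\Hbb,\lgth v_\Hbb)+O(\delta)$'', CAT$(-1)$ comparison, etc.), which is the wrong direction and does not work cleanly; and your final explanation that the constant $11$ ``absorbs'' a constant ``slightly above $10$'' is incorrect --- $11=10+1$ arises exactly from the convex-combination weights, not from slack. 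The proposal as written is not a valid proof.
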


\begin{pf}
The hyperbolic geodesic from $u$ to $v$ intersects $Y(\theta)$ at a
single point $w$. It satisfies
$d_{\Hbb}(u,v)=d_{\Hbb}(u,w)+d_{\Hbb}(w,v)$. By assumption, $u\notin
B_{\Hbb}(w, \vert w\vert_{\Hbb}/10)$, hence $d_{\Hbb}(u,w) \geq
\vert w\vert_{\Hbb}/10$. Trivially, $d_{\Hbb}(u,w) \geq
\vert u\vert_{\Hbb}-\vert w\vert_{\Hbb}$. For any $t\in[0,1]$, we obtain
\[
d_{\Hbb}(u,w) \geq t \vert w\vert_{\Hbb}/10 + (1-t) \bigl(\vert u
\vert _{\Hbb
}-\vert w\vert_{\Hbb}\bigr).
\]
Let $t=10/11$, so that the terms involving $\vert w\vert_{\Hbb}$ cancel
each other. We are left with $d_{\Hbb}(u,w) \geq\vert u\vert_{\Hbb}/11$.
Since an analogous estimate is true for $v$, this completes the
proof.
\end{pf}

Let $A(\theta) = B(e,n)^c \cap\Psi^{-1}(Z(\theta))\subset\Gamma$ be
the set of points of $\Gamma$ outside of $B(e,n)$ whose image under
$\Psi$ belongs to $Z(\theta)$. The previous lemma shows that, if a
trajectory in $\Gamma$ jumps past $A(\theta)$, it has to make a big
jump.

Let $N = \lfloor e^{\epsilon n}\rfloor$. In $X=[0,\pi]$, let $X_i =
[(2i-1)/N, 2i/N]$ for $1\leq i\leq N$. For any $\theta_i\in X_i$ and
$\theta_{i+1}\in X_{i+1}$, the visual angle from $O$ between two
points in $Y(\theta_i)$ and $Y(\theta_{i+1})$ is at least
$e^{-\epsilon n}$. It follows from Lemma~\ref{lem:angle_separe} that,
if $\epsilon$ is small enough and if $n$ is large enough, the angle
between two points in $Z(\theta_i)$ and $Z(\theta_{i+1})$ is at least
$e^{-\epsilon n}/2$. This shows in particular that $A(\theta_i)$ and
$A(\theta_{i+1})$ are disjoint.

%le4.8 #&#
\begin{lem}
\label{lem:barrieres}
If $\epsilon$ is small enough, there exist angles $\theta_i\in X_i$
such that, for all $0\leq i\leq N$,
%
%e4.4 #&#
\begin{equation}
\label{eq:barrieres} \sum_{u\in A_i, v\in A_{i+1}} G(u,v)^2
\leq1/4,
\end{equation}
where $G$ is the Green function associated to $\mu$ and we denoted
$A_0=\{x\}$, $A_{N+1}=\{z\}$ and $A_i= A(\theta_i)$ for $1\leq i\leq
N$.
\end{lem}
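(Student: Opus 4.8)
The plan is to pick the angles $\theta_i$ at random, independently, each uniform in its interval $X_i$ for the normalized Lebesgue measure, and to show that the expectation of each of the $N+1$ sums appearing in~\eqref{eq:barrieres} is much smaller than $1/(N+1)$; a union bound then leaves positive probability that all of them are $\le 1/4$ simultaneously. Two facts drive this. First, the barriers are thin in the angular direction: by Lemma~\ref{lem:angle_separe}, every point of a defining ball $B_\Hbb(w,\lgth{w}_\Hbb/10)$ of $Z(\theta)$ has its ray from $O$ within angle $Ce^{-\alpha\lgth{w}_\Hbb}$ of $[Ow)$, so for a fixed $u$ the set of $\theta$ with $u\in A(\theta)$ is an interval of length at most $Ce^{-\alpha' d(e,u)}$ (here $\alpha'$ is a fixed multiple of $\alpha$, the loss coming from the comparison between $d$ and $d_\Hbb$ in Theorem~\ref{thm:bonk_schramm}), and is empty unless $d(e,u)>n$; hence
  \begin{equation*}
  \mathbb{P}(u\in A(\theta_i)) \le C N e^{-\alpha' d(e,u)}\ \text{ when } d(e,u)>n, \qquad \mathbb{P}(u\in A(\theta_i))=0\ \text{ otherwise.}
  \end{equation*}
Second --- and this is the only place symmetry enters --- $(I-P_\mu)^{-1}$ is a bounded operator on $\ell^2(\Gamma)$, with norm $\le 1/(1-\rho)$ where $\rho<1$ is the spectral radius of $P_\mu$ (the group being necessarily non-amenable); consequently, for every $u$,
  \begin{equation*}
  \sum_{v\in\Gamma} G(u,v)^2 = \norm{(I-P_\mu)^{-1}\delta_u}_{\ell^2}^2 \le (1-\rho)^{-2}.
  \end{equation*}
It is precisely this $\ell^2$ estimate that the square in $G(u,v)^2$ is tailored to; for a non-symmetric measure $(I-P_\mu)^{-1}$ need not even be $\ell^2$-bounded and the argument collapses (consistently with the hypothesis of the statement).

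For an index $1\le i\le N-1$, Fubini (all terms are nonnegative) followed by the Cauchy--Schwarz inequality in the two variables gives
  \begin{multline*}
  \mathbb{E}\Bigl[\sum_{u\in A_i,\,v\in A_{i+1}} G(u,v)^2\Bigr]
  = \sum_{u,v} \mathbb{P}(u\in A_i)\,\mathbb{P}(v\in A_{i+1})\,G(u,v)^2
  \le C^2 N^2 \sum_{\substack{d(e,u)>n,\ d(e,v)>n}} e^{-\alpha' d(e,u)} e^{-\alpha' d(e,v)} G(u,v)^2 \\
  \le C^2 N^2 \Bigl(\sum_{d(e,u)>n} e^{-2\alpha' d(e,u)}\sum_{v} G(u,v)^2\Bigr)
  \le C^2 N^2 (1-\rho)^{-2}\sum_{d(e,u)>n} e^{-2\alpha' d(e,u)} ,
  \end{multline*}
where we used $G(u,v)=G(v,u)$ to identify the two Cauchy--Schwarz factors. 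Denoting by $h$ the exponential growth rate of $\Gamma$ (so $\Card B(e,m)\le Ce^{hm}$), the remaining sum is $\le C' e^{(h-2\alpha')n}$ \emph{provided $2\alpha'>h$}. This inequality one secures in advance: taking the dimension $m$ in Theorem~\ref{thm:bonk_schramm} large makes the comparison between $d$ and $d_\Hbb$ (hence the thinness exponent $\alpha'$) as large as one wishes, while $h$ is fixed. Therefore $\mathbb{E}[\sum_{u\in A_i,v\in A_{i+1}}G(u,v)^2]\le C'' N^2 e^{(h-2\alpha')n}\le C'' e^{(2\epsilon+h-2\alpha')n}$ using $N\le e^{\epsilon n}$. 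The two extreme cases $i=0$ and $i=N$, where $A_0=\{x\}$ and $A_{N+1}=\{z\}$ are singletons, are handled identically but carry a single factor $\mathbb{P}(\cdot\in A(\theta))$, and the same manipulations bound their expectations by $C''' e^{(\epsilon+h/2-\alpha')n}$ --- an even smaller quantity.

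It remains to choose $\epsilon$ small enough, in terms of $\alpha'$ and $h$ (e.g.\ $\epsilon<(2\alpha'-h)/4$), that all the exponents just obtained, multiplied back by $N+1\le 2e^{\epsilon n}$ and by the factor $4$ from Markov's inequality, still give quantities going to $0$. Then, for $n$ large, $\sum_{i=0}^{N}\mathbb{P}(\text{the $i$-th sum in~\eqref{eq:barrieres} exceeds } 1/4)\le 4(N+1)\max_i\mathbb{E}[\cdots] <1$, so there is a choice of $(\theta_i)_{1\le i\le N}$ for which all the inequalities~\eqref{eq:barrieres} hold; this is what is needed (the statement, like everything around it, concerns only large $n$). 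The whole scheme parallels the proof of \cite[Theorem~2.3]{gouezel_higherancona}.

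The step I expect to be the real point is the quantitative balance in the last two paragraphs: the randomized choice only works because the barriers are thin \emph{relative to the exponential growth of the group}, i.e.\ $2\alpha'>h$, which must be arranged beforehand by embedding $\Gamma$ into a sufficiently high-dimensional $\Hbb^m$ (shrinking the metric-comparison constant, enlarging $\alpha'$) and then taking $\epsilon$ correspondingly small. Everything else is routine: the thinness estimate is a direct consequence of Lemma~\ref{lem:angle_separe} together with the fact that $\Psi$ is an approximate embedding, and the global bound $\sum_v G(u,v)^2\le(1-\rho)^{-2}$ is immediate from self-adjointness of $P_\mu$ and $\rho<1$.
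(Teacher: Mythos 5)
Your overall scheme (choose the $\theta_i$ at random and uniformly in $X_i$, use a thinness estimate from Lemma~\ref{lem:angle_separe}, then conclude by a first-moment argument) is the right one, and the probability bound $\mathbb{P}(u\in A(\theta_i))\le CNe^{-\alpha' d(e,u)}$ is exactly the paper's estimate~\eqref{eq:interm}. The step that does not survive is the analytic input you feed into the expectation: you bound $\sum_{v\in\Gamma}G(u,v)^2=\norm{(I-P_\mu)^{-1}\delta_u}_{\ell^2}^2\le(1-\rho)^{-2}$, which requires the $\ell^2$-operator norm of $P_\mu$ to be strictly less than $1$. But the lemma is stated for a general symmetric admissible measure with finite Green function, and in the proof of Theorem~\ref{thm:local} one must apply it to $R\mu$, whose Markov operator has norm exactly $1$; there $(I-P_{R\mu})^{-1}$ is not bounded on $\ell^2$ and $\sum_vG_{R\mu}(u,v)^2$ need not be finite. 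The paper avoids this by using only the \emph{spherical} bound $\sum_{x\in\Sbb^k}G(e,x)^2\le C$, uniform in $k$, obtained from the supermultiplicativity of $H_r(e,x)=G_{r\mu}(e,x)G_{r\mu}(x,e)$ together with a spherical-summability fact for supermultiplicative functions, and then letting $r\uparrow1$. Indeed the paper remarks that ``the function $G(e,x)$ is not in $\ell^2(\Gamma)$, but close'' --- a sentence that is precisely the negation of the bound your proof rests on. Replacing your global $\ell^2$ bound by the spherical one is not cosmetic: the bookkeeping in the expectation has to be redone sphere by sphere.

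A secondary concern is your claim that $2\alpha'>h$ can be arranged by taking $m$ large in Theorem~\ref{thm:bonk_schramm}. The constant $\lambda$ in that theorem is determined by $\Gamma$; increasing the ambient dimension does not make the comparison factor smaller, so one cannot dial up $\alpha'=\alpha/\lambda$ at will. Whatever balance between the thinness exponent and the growth rate is needed should come out of the geometry already present (and is handled in the cited argument of~\cite{gouezel_higherancona}); it is not a free parameter you get to choose.
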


This lemma shows that one can choose barriers so that the weight of
trajectories going from one barrier to the next is small. This will
guarantee that trajectories visiting all barriers have a
superexponentially small weight. It will remain to handle
trajectories jumping past barriers---we will use
Lemma~\ref{lem:taille_saut} to show that the jumps have to be large,
implying that these trajectories contribute again with a very small
weight thanks to the argument of Section~\ref{subsec:free}.

\begin{pf*}{Proof of Lemma~\ref{lem:barrieres}}
The proof is similar to that of Lemma~2.6
in \cite{gouezel_higherancona}; the difference is that we are
considering thicker barriers. For $a\in\Gamma$, let $X_i(a)$ be the
set of angles $\theta\in X_i$ such that $a\in A(\theta)$. If one
shows that
%
%e4.5 #&#
\begin{equation}
\label{eq:interm} \Leb\bigl(X_i(a)\bigr) \leq C e^{-\alpha\vert a\vert}
\end{equation}
for some $\alpha$ independent of $\epsilon$, the remaining part of
the argument of~\cite{gouezel_higherancona} will apply verbatim. We
sketch very quickly the rest of the argument
in~\cite{gouezel_higherancona} for the convenience of the reader.

Using hyperbolicity, one checks that a supermultiplicative function\break
$H$ with $\sum_{x\in\Gamma} H(e,x)<\infty$ has bounded sum on any
sphere $\Sbb^k$, that is,\break $\sum_{x\in\Sbb^k} H(e,x) \leq C$ uniformly
in $k$, where $C$ does not depend on $H$. This estimate applies to
$H_r(e,x) = G_{r\mu}(e,x)G_{r\mu}(x,e)$ for any $r<1$. Letting $r$
tend to $1$ and using the symmetry of $\mu$, we obtain $\sum_{x\in
\Sbb^k} G(e,x)^2 \leq C$. Hence, the function $G(e,x)$ is not in
$\ell^2(\Gamma)$, but close. In particular, if $A$ is a subset such
that $\Card(A\cap\Sbb^k)$ is exponentially smaller than $\Sbb^k$,
one expects that typically $\sum_{x\in A} G(e,x)^2$ will be finite
(and small if $A$ is thin enough). Of course, this might not be true
for all such subsets $A$, but it will be true for most subsets $A$ in
a suitable sense. The lemma is proved by showing that, if one chooses
$\theta_i$ randomly in $X_i$, then the estimate~\eqref{eq:barrieres}
holds with positive probability. This follows from the combination of
inequality~\eqref{eq:interm} with the estimate $\sum_{x\in
\Sbb^k} G(e,x)^2 \leq C$.

It remains to prove~\eqref{eq:interm}. Since distances in the group
and in hyperbolic space are equivalent, it is sufficient to show the
corresponding estimate in $\Hbb$, that is, for all $u\in\Hbb$,
\[
\Leb\bigl\{\theta\st u\in Z(\theta)\bigr\} \leq C e^{-\alpha\vert u\vert
_{\Hbb}}.
\]
For $u\in Z(\theta)$, there exists $v\in Y(\theta)$ such that
$d_{\Hbb}(u,v) \leq\vert v\vert_{\Hbb}/10$. Since $\vert v\vert
_{\Hbb}/10
\leq(d_{\Hbb}(u,v) + \vert u\vert_{\Hbb})/10$, we get
$d_{\Hbb}(u,v)\leq\vert u\vert_{\Hbb}/9$, that is, $v\in B_{\Hbb}(u,\break
\vert u\vert_{\Hbb}/9)$. Lemma~\ref{lem:angle_separe} shows that the
trace at infinity of this ball gives rise to an exponentially small
angle. This completes the proof.
\end{pf*}

Let us prove the pre-Ancona inequalities. The Green function
$G(x,z;\break B(e,n)^c)$ is the sum of the weights $\pi(\tau)$ of the
trajectories $\tau$ from $x$ to $z$ avoiding $B(e,n)$. We will say
that such a trajectory is walking if it visits in this order the
barriers $A_1,\ldots,A_N$ constructed in Lemma~\ref{lem:barrieres},
and jumping otherwise.

Decomposing walking trajectories according to their first visits to
the barriers, we get that their contribution to $G(x,z; B(e,n)^c)$ is
bounded by
\[
\sum_{a_1 \in A_1,\ldots, a_N \in A_N} G(x, a_1) G(a_1,
a_2) \cdots G(a_{N-1}, a_N)
G(a_N, z).
\]
Using the estimate~\eqref{eq:barrieres} on barriers and
Cauchy--Schwarz inequality, one shows that this is bounded by $2^{-N}
\leq2^{-e^{\epsilon n}+1}$ (see the beginning of the proof of
Lemma~2.6 in~\cite{gouezel_higherancona}). Hence, the contribution of
walking trajectories is smaller than any exponential, as desired.

Consider now a jumping trajectory $\tau=(x_0=x, x_1,\ldots, x_{k-1},
x_k=z)$, and assume that the first jump past a barrier happens at
index $i$, from $x_i$ to $x_{i+1}$. One associates to $\tau$ a
modified trajectory $m(\tau)$ as in Section~\ref{subsec:free} [see
equation~\eqref{eq:def_m_tau} there---as we assume that $\mu$ gives
positive weight to the generators, there is no need to use the more
complicated definition~\eqref{eq:def_m_tau_complicated}].
Lemma~\ref{lem:taille_saut} shows that there exists a constant $C$
such that $d(x_i, x_{i+1}) \geq C^{-1} (\vert x_i\vert+\vert
x_{i+1}\vert)$.
This is sufficient for all the computations of
Section~\ref{subsec:free}. It follows that the contribution of
jumping trajectories is smaller than any exponential, as desired.
%\qed

%s4.5 #&#
\subsection{Ancona inequalities in Fuchsian groups}

In this paragraph, we prove the third item of
Theorem~\ref{thmm:new_ancona}: an admissible measure $\mu$ with
superexponential tails and finite Green function on a cocompact
lattice $\Gamma$ of ${\mathrm{PSL}(2,\R)}$ satisfies Ancona
inequalities. Since the
argument follows rather closely the previous subsection, we will only
sketch the argument. Note that $\Gamma$ is quasi-isometric with
$\Hbb^2$, giving an identification of the boundary $\partial\Gamma$
with the circle $S^1$. The planarity of $\Hbb^2$ will be essential.

Again, we want to prove pre-Ancona inequalities between points $x$,
$y$ and $z$ with $n\leq d(x,y),d(y,z) \leq100 n$, and we may assume
that $y=e$. As in the previous subsection, we will construct several
barriers between $x$ and $z$, and treat separately trajectories that
visit all the barriers (walking trajectories) and trajectories that
jump past a barrier (jumping trajectories).

The basic ingredient for the barriers is constructed
in~\cite{gouezel_higherancona}, Appendix A: it is shown there that,
for any finite family of disjoint subintervals
$I^{(1)},\ldots,I^{(N)}$ of $S^1$, one can find for $1\leq i\leq N$
paths $X^{(i)}_n$ in the Cayley graph of $\Gamma$ starting from $e$
such that:
\begin{itemize}
\item One has $d(X^{(i)}_k, X^{(i)}_{k+1}) \leq1$.
\item The path $X^{(i)}_k$ converges to a point in $I^{(i)}$ when
$k\to\infty$.
\item There exist $\alpha>0$ and $C>0$ such that
%
%e4.6 #&#
\begin{equation}\quad\hspace*{4pt}
\label{ineq:G_fuchsian} G\bigl(e, X^{(i)}_k\bigr) \leq C
e^{-\alpha k}\quad \mbox{and} \quad G\bigl(X^{(i)}_k,
X^{(j)}_\ell\bigr) \leq C e^{-\alpha(k+\ell)}\qquad
\mbox{for all }i
\neq j.
\end{equation}
\item For some $s>0$, one has $d(e,X^{(i)}_k) \sim s k$.
\end{itemize}
The constant $C$ in the third item depends on $N$, while the other
constants do not. The paths $X^{(i)}_k$ are constructed as typical
trajectories of \emph{another} (symmetric) random walk. The
inequalities for $G$ only rely on the
supermultiplicativity~\eqref{eq:super_mult} of the Green function of
$\mu$ (and a version of Kingman's ergodic theorem)---in particular,
the finiteness of the support of $\mu$ is not required.

Given such trajectories, one can replace each point $X^{(i)}_k$ by a
ball $B(X^{(i)}_k, C)$ of some fixed radius $C$. This yields barriers
that random walks with finite range cannot avoid, as
in~\cite{gouezel_higherancona}. The
inequalities in~\eqref{ineq:G_fuchsian} guarantee that such barriers
satisfy an inequality similar to~\eqref{eq:barrieres}. However, such
a thickening does not imply that a jump past the barrier has to be
long. Let us define a thicker barrier by $Z_i = \bigcup_k
B(X^{(i)}_k, ck)$, where $c\leq1$ is a suitably small constant, and
let $A_i = Z_i \cap(\Gamma- B(e,n))$.

As in Lemma~\ref{lem:taille_saut}, one shows that jumps above such
barriers have to be long. It follows that jumping trajectories will
give a contribution to $G(x,z; B(e,n)^c)$ that is smaller than any
exponential, as in Section~\ref{subsec:free}.

To control the contribution of walking trajectories, it only remains
to prove that an inequality similar to~\eqref{ineq:G_fuchsian} holds:
if $n$ is large enough,
%
%e4.7 #&#
\begin{equation}
\label{eq:control_fuchsian} \sum_{u\in A_i, v\in A_j} G(u,v)^2
\leq1/4.
\end{equation}
To prove this estimate, consider two points $u$ and $v$ in $A_i$ and
$A_j$. They belong to balls $B(X^{(i)}_k, ck)$ and $B(X^{(j)}_\ell,
c\ell)$. Note first that
\[
n\leq\vert u\vert \leq\bigl\vert X^{(i)}_k\bigr\vert + ck \leq(1+c)
k.
\]
In particular, $k \geq n/2$. In the same way, $\ell\geq n/2$. Thanks
to Harnack inequalities~\eqref{eq_harnack}, we have
\[
G(u,v) \leq C_0^{d(u, X^{(i)}_k)}C_0^{d(X^{(j)}_\ell, v)} G
\bigl(X^{(i)}_k, X^{(j)}_\ell\bigr) \leq
C_0^{ck+c\ell} C e^{-\alpha(k+\ell)}.
\]
If $c$ is small enough, this is bounded by $C e^{-\alpha(k+\ell)/2}$.
Hence, we get
\[
\sum_{u\in A_i, v\in A_j} G(u,v)^2 \leq C \sum
_{k,\ell\geq n/2} \Card B\bigl(X^{(i)}_k,
ck\bigr) \Card B\bigl(X^{(j)}_\ell, c\ell\bigr) C
e^{-\alpha(k+\ell)}.
\]
If $c$ is small enough, $\Card B(X^{(i)}_k, ck) = \Card B(e, ck)$
grows at most like $e^{\alpha k/2}$. The
estimate~\eqref{eq:control_fuchsian} follows for large $n$. %\qed

%s4.6 #&#
\subsection{Strong Ancona inequalities}

The proof of Theorem~\ref{thmm:local} on the asymptotics of transition
probabilities involves a reinforcement of Ancona inequalities, called
strong Ancona inequalities and defined as follows.

%de4.9 #&#
\begin{definition}
\label{def_strong_ancona}
An admissible measure $\mu$ with finite Green measure on a hyperbolic
group satisfies strong Ancona inequalities if it satisfies Ancona
inequalities and, additionally, there exist constants $C>0$ and
$\rho>0$ such that, for all points $x,x',y,y'$ whose configuration is
approximated by a tree as follows:\vspace*{6pt}

\includegraphics{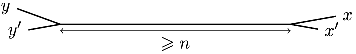}
\vspace*{6pt}

\begin{center}
 \begin{tikzpicture}
 \draw[thick]
 (0,0) -- +(160:1cm) node[left] {$y$}
 (0,0) -- +(190:0.7cm) node[left] {$y'$}
 (0,0) -- (5,0) coordinate (y0)
 (y0) -- +(10:1cm) node[right] {$x$}
 (y0) -- +(-10:0.6cm) node[right] {$x'$};
 \draw[<->] (0,-0.14) -- node[below] {$\geq n$} (5, -0.14);
 \end{tikzpicture}\vspace*{6pt}
\end{center}
\noindent one has
%
%e4.8 #&#
\begin{equation}
\label{eq:strong_ancona}
\biggl\vert \frac{G(x,y)/G(x',y)}{G(x,y')/G(x',y')} -1\biggr\vert \leq C e^{-\rho n}.
\end{equation}
\end{definition}

Usual Ancona inequalities ensure that
$(G(x,y)/G(x',y))/(G(x,y')/\break G(x',y'))$ [the quantity on the left-hand
side of~\eqref{eq:strong_ancona}] is bounded from above and from
below. Strong Ancona inequalities strengthen this by saying that it
is exponentially close to $1$, in terms of the distance between
$\{x,x'\}$ and $\{y,y'\}$.

In this paragraph, we will prove the following theorem.

%th4.10 #&#
\begin{thmm}
\label{thmm:strong_ancona}
In a hyperbolic group $\Gamma$, consider an admissible measure $\mu$
with finite Green function and superexponential tails. Assume that
$\mu$ satisfies pre-Ancona inequalities. Then it satisfies strong
Ancona inequalities.
\end{thmm}

Quantitative inequalities such as strong Ancona inequalities are
instrumental to get asymptotics of transition probabilities. Indeed,
the following holds. Consider an admissible symmetric probability
measure $\mu$ on a hyperbolic group, let $R$ denote the inverse of
the spectral radius of the corresponding random walk, and assume that
the measures $r\mu$ (for $1\leq r\leq R$) satisfy strong Ancona
inequalities, uniformly in $r$ (i.e., with the same $C$ and the same
$\rho$). If the random walk generated by $\mu$ is aperiodic, it
follows that $p^n(x,y) \sim C(x,y) R^{-n}n^{-3/2}$ for all $x,y\in
\Gamma$. If $\mu$ is periodic, this is true for even $n$ (resp., odd
$n$) if the distance from $x$ to $y$ is even (resp., odd). This
statement follows from~\cite{gouezel_lalley}, Theorem~9.1
and~\cite{gouezel_higherancona}, Theorem~3.1.

\begin{pf*}{Proof of Theorem~\ref{thmm:local}}
Consider an admissible symmetric probability measure $\mu$ with
superexponential tails in a hyperbolic group $\Gamma$. Let $R$ denote
the inverse of its spectral radius.

It follows from the discussion in the previous paragraph that, to
prove Theorem~\ref{thmm:local}, it suffices to prove strong Ancona
inequalities for the measures $r\mu$, uniformly in $1\leq r\leq R$.
Pre-Ancona inequalities have been proved in
Section~\ref{subsec:preAncona_symmetric} for each of those
measures, hence they also satisfy strong Ancona inequalities by
Theorem~\ref{thmm:strong_ancona}. The only remaining problem is the
uniformity of those inequalities for $1\leq r \leq R$. One checks in
the proof of Theorem~\ref{thmm:strong_ancona} that the constants $C$
and $\rho$ one obtains only depend on the constants in the pre-Ancona
inequalities and in the Harnack inequalities. The pre-Ancona
inequalities for $R\mu$ imply the same inequalities for $r\mu$ for
any $r$, since $r\mu\leq R\mu$. Hence, the pre-Ancona inequalities
are uniform. Moreover, it is clear that the Harnack inequality are
also uniform in~$r$.
\end{pf*}

The rest of this subsection is devoted to the proof of
Theorem~\ref{thmm:strong_ancona}. The argument dates back to Anderson
and Schoen~\cite{anderson_schoen}. For finitely supported measures,
the methods of~\cite{anderson_schoen} were adapted to the free group
by Ledrappier~\cite{ledrappier_freegroup}, and then to any hyperbolic
group by Izumi, Neshveyev and Okayasu~\cite{izumi_hyperbolic}. The
idea is to define a sequence of shrinking domains on which two given
positive harmonic functions (with a common normalization) have to be
closer and closer, by an inductive argument: one shows that two
positive harmonic functions defined on one of those domains have a
common significant part on a smaller domain. One can then subtract
this common part to both functions in the smaller domain, and repeat
the argument. In particular, one always works with positive harmonic
functions, but defined on smaller and smaller domains.

While we will essentially follow the same strategy, the difficulty in
the case of infinitely supported measures is that harmonicity becomes
a global property, involving the whole group: it will not be possible
to work with functions defined only on subdomains, we will need to
keep track of the behavior of functions in the whole group. We will
retain positivity in the smaller domains, but we will also need
quantitative controls everywhere in the group.

The proof will involve not only global Ancona inequalities, but also
Ancona inequalities for Green functions restricted to some classes of
domains (as defined in Section~\ref{sec:green_function}).

%de4.11 #&#
\begin{definition}
Let $H_0$ be a constant. Let $[x,z]$ be a geodesic in $\Gamma$, and
let $y\in[x,z]$. We say that a subset $\Omega$ of $\Gamma$ is
$H_0$-hourglass-shaped around $x,y,z$ if, for any $w\in[x,z]$, the
ball $B(w, H_0+d(w,y)/2)$ is included in $\Omega$.
\end{definition}

The proof of Ancona inequalities from pre-Ancona inequalities (that
we described briefly after Lemma~\ref{lem:GL}) still works in
$H_0$-hourglass-shaped domains, since it shows that most trajectories
flow along the hourglass. This implies the following lemma (this is
Theorem~4.1 in~\cite{gouezel_lalley}).

%le4.12 #&#
\begin{lem}
\label{lem:ancona_hourglass}
Consider an admissible measure $\mu$ satisfying pre-Ancona
inequalities in a hyperbolic group. Let $H_0$ be large enough. There
exists $C>0$ such that, for any domain $\Omega$ which is
$H_0$-hourglass-shaped around three points $x,y,z$ on a geodesic (in
this order), the Green function relative to $\Omega$ satisfies Ancona
inequalities, that is,
\[
G(x,z; \Omega)\leq C G(x,y;\Omega) G(y,z;\Omega).
\]
\end{lem}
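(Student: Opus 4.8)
The plan is to show that Lemma~\ref{lem:ancona_hourglass} follows from essentially the same argument as Lemma~\ref{lem:GL} (the passage from pre-Ancona to Ancona inequalities), but carried out \emph{inside} the hourglass-shaped domain $\Omega$ rather than in the whole group. Recall the sketch given after Lemma~\ref{lem:GL}: one builds a string of beads along the geodesic $[x,z]$, the bead centered at $w\in[x,z]$ having radius proportional to $d(w,y)$; pre-Ancona inequalities show that the weight of trajectories from $x$ to $z$ avoiding any single bead is small relative to the total weight, and an induction along the string of beads then shows that most of the weight $G(x,z)$ comes from trajectories passing within $O(1)$ of $y$, which factors as $G(x,y)G(y,z)$ up to a constant. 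The key observation is that every bead $B(w, c\, d(w,y) + O(1))$ used in that construction is, by the very definition of $H_0$-hourglass-shaped, contained in $\Omega$ as soon as $H_0$ is chosen large enough (larger than the additive $O(1)$ appearing in the bead construction and with $c\le 1/2$). So the entire combinatorial/probabilistic argument takes place among trajectories that stay in $\Omega$, and it bounds $G(x,z;\Omega)$ from above by $C\,G(x,y;\Omega)\,G(y,z;\Omega)$.

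Concretely, the steps I would carry out are: (1) Recall from \cite{gouezel_lalley} (proof of Theorem~4.1) the bead construction along $[x,z]$, with bead $i$ centered at a point $w_i\in[x,z]$ at distance $\asymp 2^{i}$ from $y$ (beads on both sides of $y$), radius $\asymp d(w_i,y)/2$, and verify that pre-Ancona inequalities apply to each bead: the triple of points delimiting bead $i$ sits on the geodesic with the two relevant gaps comparable, so $G(\cdot,\cdot;B(w_i,r_i)^c)\le K^{-r_i}$ for $r_i$ the bead radius, with $K$ as large as we like. (2) Check that each such ball $B(w_i,r_i)$ is contained in $\Omega$ — this is immediate from the hourglass condition provided $H_0$ exceeds the (universal, depending only on $\delta$ and the constant in the bead construction) additive constant, and the proportionality constant in the bead radius is $\le 1/2$. (3) Run the induction: writing $G(x,z;\Omega)$ as the total weight of $\Omega$-trajectories, decompose according to whether the trajectory enters the innermost bead around $y$; trajectories avoiding it must avoid some bead, and summing the resulting geometric series (the ratios $K^{-r_i}$ are summable and small) bounds the avoiding contribution by a small fraction of $G(x,z;\Omega)$. (4) Conclude that a definite fraction of $G(x,z;\Omega)$ comes from $\Omega$-trajectories passing within $O(1)$ of $y$; splitting such a trajectory at its passage near $y$ and using Harnack inequalities (valid for Green functions restricted to sets containing a fixed neighbourhood of the relevant geodesic segments, as noted after~\eqref{eq_harnack}) gives $G(x,z;\Omega)\le C\,G(x,y;\Omega)\,G(y,z;\Omega)$.

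The main obstacle — and the only place where something beyond bookkeeping is needed — is making sure the induction is genuinely self-contained inside $\Omega$, i.e.\ that when one ``factors out'' a bead one is comparing $G(\cdot,\cdot;\Omega)$ with $G(\cdot,\cdot;\Omega)$ and not accidentally with the global Green function. This is handled by always using the restricted decomposition formula $G(u,v;\Omega)=G(u,v;\Omega\cap A^c)+\sum_{a\in A\cap\Omega}G(u,a;\Omega\cap A^c)G(a,v;\Omega)$ from Section~\ref{sec:green_function} with $A$ the union of a bead, and by observing that $G(u,v;\Omega\cap B(w_i,r_i)^c)$ is dominated by the \emph{global} quantity $G(u,v;B(w_i,r_i)^c)$, to which pre-Ancona inequalities apply directly. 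Since the whole argument is uniform once $H_0$ is fixed, the constant $C$ depends only on the hyperbolicity constant, the constants in the pre-Ancona and Harnack inequalities, and $H_0$ — exactly as asserted. As this is verbatim the argument of Theorem~4.1 in \cite{gouezel_lalley} with ``$\Gamma$'' replaced by ``$\Omega$'' at every step, we do not repeat the details.
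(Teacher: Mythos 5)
Your proposal is correct and follows essentially the same route as the paper: the paper simply cites Theorem~4.1 of Gou\"ezel--Lalley with the remark that the bead construction (sketched after Lemma~\ref{lem:GL}) goes through because ``most trajectories flow along the hourglass'', and your sketch correctly unpacks the two points that make the citation legitimate — each bead $B(w,c\,d(w,y)+O(1))$ lies in $\Omega$ as soon as $c\le 1/2$ and $H_0$ exceeds the $O(1)$ constant, and $G(\cdot,\cdot;\Omega\cap B^c)\le G(\cdot,\cdot;B^c)$, so pre-Ancona upper bounds apply directly to the restricted Green functions. Nothing is missing and no step would fail; the paper's own ``proof'' is exactly the one-line reduction you give.
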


From this point on, we fix an admissible measure $\mu$ with
superexponential tails, which satisfies pre-Ancona inequalities. We
will prove that it satisfies strong Ancona inequalities. We fix the
constant $H_0$ given by Lemma~\ref{lem:ancona_hourglass} for this
measure.

The next lemma gives the basic inductive step for the proof of
Theorem~\ref{thmm:strong_ancona}. For $u,v,z\in\Gamma$, we write
$(u,v)_{z}$ for their Gromov product, given by $(u,v)_z =
(d(u,z)+d(v,z)-d(u,v))/2$. This is essentially the length of the part
that is common to two geodesics $[z,u]$ and $[z,v]$.

%%f1 #&#
%\begin{figure}
%
\includegraphics{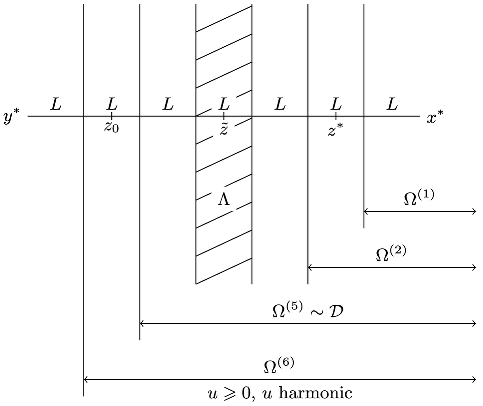}

%
%\end{figure}

\begin{figure}
\centering
\begin{tikzpicture}
% Hachures dans \Lambda
\begin{scope}
 \clip
 (4, 2) -- (3, 2) -- (3,-3) -- (4, -3) -- cycle;
 \foreach\x in {-3, -2.5,..., 2}
 \draw(3,\x) -- ++(25:2cm);
 \end{scope}
% labels le long de \gamma
\draw(3,0) -- node[above, fill=white]{$L$} node[below, fill=white]{$
\tilde z$} (4,0);
\node at (3.5, -1.7) [above, fill=white] {$\Lambda$};
\draw(0,0) node[left] {$y^*$} -- node[above] {$L$} (1,0)
 -- node[above] {$L$} node[below] {$z_0$} (2,0) -- node[above] {$L$}
(3,0)
 -- (4,0) -- node[above] {$L$} (5,0)
 -- node[above] {$L$} node[below]{$z^*$} (6,0) -- node[above]{$L$}
(7,0) node[right]{$x^*$};
% petits traits verticaux pour marquer les points
\def\tickheight{0.07}
\draw(5.5, -\tickheight) -- (5.5, \tickheight);
\draw(5, -\tickheight) -- (5, \tickheight);
\draw(3.5, -\tickheight) -- (3.5, \tickheight);
\draw(1.5, -\tickheight) -- (1.5, \tickheight);
% Omega^6
\draw(1, 2) -- (1,-5);
\draw[<->] (1, -4.7) -- node[above] {$\Omega^{(6)}$} node[below] {$u
\geq0$, $u$ harmonic} ++(7, 0);
% Omega^5
\draw(2,2) -- (2, -4);
\draw[<->] (2, -3.7) -- node[above] {$\Omega^{(5)}\sim\boD$} ++ (6,0);
% bord de Lambda
\draw(3,2) -- (3,-3);
\draw(4,2) -- (4, -3);
% Omega^2
\draw(5,2) -- (5, -3);
\draw[<->] (5, -2.7) -- node[above] {$\Omega^{(2)}$} ++ (3,0);
% Omega^1
\draw(6,2) -- (6, -2);
\draw[<->] (6, -1.7) -- node[above]{$\Omega^{(1)}$} ++(2, 0);
\end{tikzpicture}
\caption{The domains in Lemma~\protect\ref
{lem:strong_ancona}.}\label{fig1}
\end{figure}
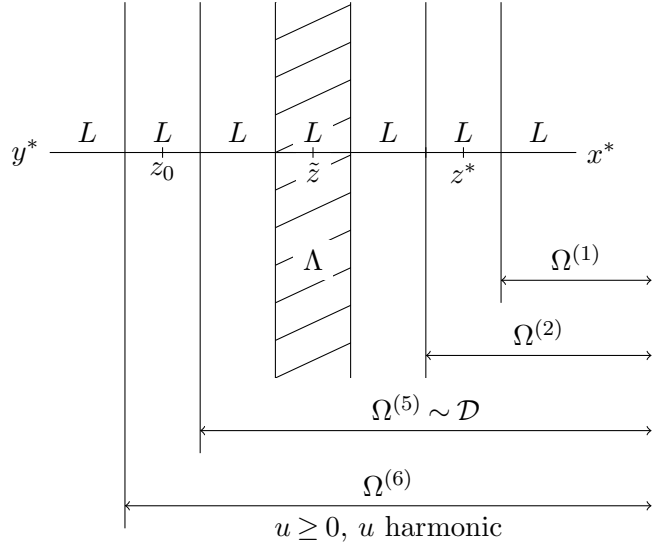

%le4.13 #&#
\begin{lem}
\label{lem:strong_ancona}
There exists $C_1>1$ such that, for any $D>0$, the following holds if
$L$ is a large enough even integer. Consider a geodesic segment
$\gamma$ between two points $x^*$ and $y^*$, of length $7L$. Let
$\Omega^{(j)}=\{z \st(y_*, z)_{x_*} \leq jL\}$ for $1\leq j\leq6$
(this is essentially the set of points whose projection on $\gamma$
is at distance at most $jL$ of $x^*$) and let $z^*$ be the point at
distance $3L/2$ of $x^*$ on $\gamma$. Let $\boH$ be the set of
functions $u\dvtx\Gamma\to\R$ satisfying the following properties:
\begin{longlist}[1.]
\item[1.] the function $u$ is positive on $\Omega^{(6)}$;
\item[2.] for all $z\in\Gamma$, one has $\vert u(z)\vert\leq D^{d(z,z^*)}
u(z^*) $;
\item[3.] the function $u$ is harmonic on $\Omega^{(6)}$, that is, $u(z)
= \sum_{w\in\Gamma} p(z,w)u(w)$ for all $z\in\Omega^{(6)}$
(note that the previous property ensures that this sum is
well defined, since $\mu$ has superexponential tails);
\item[4.] the function $\vert u(z)\vert$ is bounded by a finite linear
combination of functions $G(z, t_i)$.
\end{longlist}
Then there exists a domain $\boD$, included in $\Omega^{(6)}$ and
including $\Omega^{(5)}$ such that, for all $z\in\Omega^{(1)}$, for
all $u\in\boH$,
\[
C_1^{-1} \leq\frac{u(z)}{G(z,z^*; \boD)u(z^*)} \leq C_1.
\]
\end{lem}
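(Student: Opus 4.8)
The plan is to follow the Anderson–Schoen / Izumi–Neshvaev–Okayasu scheme, adapted to hourglass domains and to the fact that harmonicity is now a global condition. First I would set up the geometry: along the geodesic $\gamma$ from $x^*$ to $y^*$ one has nested domains $\Omega^{(1)}\subset\dotsb\subset\Omega^{(6)}$, each being a "fattening" of a sub-segment, and each $\Omega^{(j+1)}$ is $H_0$-hourglass-shaped around an appropriate triple of points lying on $\gamma$ — in particular around $z^*$ and a point roughly at distance $jL$. This is where Lemma~\ref{lem:ancona_hourglass} gets used: the relative Green functions $G(\cdot,\cdot;\Omega^{(j)})$ obey Ancona inequalities, so they are essentially multiplicative along $\gamma$. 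The domain $\boD$ to be produced will be something like $\Omega^{(5)}$ together with a controlled "outlet" near $\tilde z$ (the $\Lambda$-region in the figure), chosen so that $G(z,z^*;\boD)$ is comparable to $G(z,z^*;\Omega^{(6)})$ for $z\in\Omega^{(1)}$ while still being a genuine relative Green function one can iterate with in the next lemma.

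**Key steps.** (1) For $u\in\boH$, decompose the harmonic function on $\Omega^{(6)}$ using the relative Green function: for $z\in\Omega^{(1)}$, write $u(z)=\sum_{w\notin\Omega^{(6)}}(\text{exit contribution})\,u(w)+(\text{part carried inside }\Omega^{(6)})$, using the splitting formula from Section~\ref{sec:green_function}. The superexponential-tails hypothesis (property~(2) of $\boH$, plus the bound $\abs{u(w)}\le D^{d(w,z^*)}u(z^*)$) guarantees these sums converge and that the long-range contribution from far-away $w$ is negligible. (2) The main positive estimate: show that the mass of $u$ "entering through the wide part of the hourglass near $\tilde z$" dominates, i.e. $u(z)\ge C^{-1}G(z,z^*;\boD)u(z^*)$. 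This comes from pushing all the harmonic mass through the narrow neck $L$ around $\tilde z$: a trajectory from $z^*$ that reaches $z\in\Omega^{(1)}$ and stays in $\boD$ must pass the neck, and Ancona multiplicativity in the hourglass lets me factor $G(z,z^*;\boD)\approx G(z,\tilde z;\boD)G(\tilde z,z^*;\boD)$, while positivity of $u$ on $\Omega^{(6)}$ combined with Harnack (\eqref{eq_harnack}) gives $u(z)\gtrsim G(z,\tilde z;\boD)\,u(\tilde z)\gtrsim G(z,\tilde z;\boD)\,G(\tilde z,z^*;\boD)\,u(z^*)$. (3) The matching upper bound $u(z)\le C G(z,z^*;\boD)u(z^*)$: bound $u(z)$ by its total harmonic mass, use property~(4) (domination by linear combinations of Green functions $G(\cdot,t_i)$) to control the tail outside $\boD$ via global Ancona inequalities, and use that everything that reaches $z$ from inside $\Omega^{(6)}$ must again funnel through the neck, so by Lemma~\ref{lem:ancona_hourglass} it is $\lesssim G(z,z^*;\boD)\cdot(\text{harmonic mass at }z^*)\approx G(z,z^*;\boD)u(z^*)$.

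**The main obstacle.** The delicate point — and the reason properties~(2) and~(4) of $\boH$ are imposed — is the \emph{global} nature of harmonicity: unlike the finitely supported case, $u$ can "leak" across any barrier in a single long jump, so one can never truly localize. The hard part will be showing that the contribution to $u(z)$ coming from trajectories that jump \emph{out of} $\boD$ (or out of $\Omega^{(6)}$) and come back, or that jump directly over the neck near $\tilde z$, is only a small fraction of the total. This is exactly the phenomenon handled in Subsection~\ref{subsec:free} and Lemma~\ref{lem:taille_saut}: a jump past the (fattened) neck must be long, costing a superexponential factor by superexponential tails, while the a priori size of $u$ far away is only exponential in $d(\cdot,z^*)$ (property~(2)) or controlled by Green functions (property~(4)). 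Making these two rates fight each other — and choosing $L$ large enough, depending on $D$, so that superexponential decay of long jumps beats the $D^{d(\cdot,z^*)}$ growth — is the crux; once that is in place, the comparison constant $C_1$ depends only on the Ancona/hourglass constant and Harnack, not on $D$ or $L$, and the lemma follows. I would finish by defining $\boD$ explicitly (take $\boD=\Omega^{(5)}\cup\Lambda$ with $\Lambda$ the hourglass neighborhood of the neck segment $[\,\cdot,\tilde z\,]$) and checking $\Omega^{(5)}\subset\boD\subset\Omega^{(6)}$ together with the two displayed inequalities for all $z\in\Omega^{(1)}$, $u\in\boH$.
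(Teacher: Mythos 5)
You correctly identify the Anderson--Schoen iteration, the role of Lemma~\ref{lem:ancona_hourglass}, and the central obstruction (mass can leak across any barrier in a single long jump, so harmonicity cannot be localized). However, your sketch for the lower bound has a genuine gap. You write that positivity of $u$ on $\Omega^{(6)}$ combined with Harnack yields $u(z)\gtrsim G(z,\tilde z;\boD)\,u(\tilde z)$. This does not follow: the decomposition of $u(z)$ by the first visit to $\tilde z$ (or the exit from $\boD$) contains exit terms $u(w)$ for $w\notin\Omega^{(6)}$, and condition (1) of $\boH$ does not make those positive, so you cannot discard them. A priori they could cancel the contribution you wish to keep. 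This is exactly the obstacle you flag in your final paragraph, but your lower-bound argument does not return to neutralize it, and it is not obvious how to do so while proving the two inequalities separately.

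The paper avoids the issue by deriving both bounds simultaneously from one identity. After first establishing (via conditions (2) and (4)) that for $z\in\Omega^{(2)}$
\begin{equation*}
u(z) = \sum_{w\in \Omega^{(6)}\setminus\boD}\sum_{w'\in \Lambda} G(z,w';\boD)\, G(w',w;\boD\setminus\Lambda)\, u(w) + o_L(1)\, G(z,z^*;\boD)u(z^*) + o_L(1)\, u(z),
\end{equation*}
where $\Lambda=\Omega^{(4)}\setminus\Omega^{(3)}$ is a barrier \emph{inside} $\boD$ and every $u(w)$ appearing is positive, one applies Ancona in the hourglass $\boD$ to get $G(z,w';\boD)=C_3^{\pm1}G(z,z^*;\boD)G(z^*,w';\boD)$, and then compares with the same identity at $z=z^*$; this gives the two-sided comparison in one stroke. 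Two further discrepancies: condition (4) is not used to control tails outside $\boD$ via Ancona, but earlier, to justify that $\sum_w p^n(z,w;\boD)u(w)\to 0$ so one may pass to the limit in the iterated harmonicity; and the domain $\boD$ is not $\Omega^{(5)}\cup\Lambda$ (that equals $\Omega^{(5)}$ since $\Lambda\subset\Omega^{(5)}$), but a thickening of $\Omega^{(5)}$ by (enlarged) geodesics between its points, chosen so that any two points of $\boD$ are joined inside $\boD$ by a path of weight $\geq C_0^{-d}$ -- a connectivity property essential for the jump-replacement argument that controls trajectories jumping past $\Lambda$.
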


Note that the Green function $G(z,z^*; \boD)$ satisfies a Harnack
inequality on $\Omega^{(1)}$, of the form $G(z,z^*;\boD) \leq
C_0^{d(z,z')} G(z',z^*;\boD)$ where the constant $C_0$ only depends
on $\mu$. Therefore, the conclusion of the lemma implies that, for
all $z,z'\in\Omega^{(1)}$, one has
\[
u(z)\leq C_1^2 C_0^{d(z,z')} u
\bigl(z'\bigr).
\]
This inequality should be compared to the second assumption on $u$,
involving an arbitrarily large constant $D$. Hence, the lemma asserts
that a weak growth control implies in fact a much stronger growth
control (but on a smaller domain). This remark will be crucial to
check inductively the assumptions of the lemma.

\begin{pf*}{Proof of Lemma \ref{lem:strong_ancona}}
Let $D>0$ be fixed, we will show the conclusion of the lemma if $L$
is large enough. We will write $o_L(1)$ for a term that may depend on
$D$ and $L$, and tends to $0$ when $L$ tends to infinity (with fixed
$D$). We will also write $C$ for generic constants that do not depend
on $D$. In particular, the constants in various Harnack inequalities
will be denoted by $C_0$.

%st1 #&#
\begin{step}
There exists a domain $\boD$, containing $\Omega^{(5)}$ and contained
in a fixed size neighborhood of $\Omega^{(5)}$, such that for all
$z,z'\in\boD$ there exists a path in $\boD$ from $z$ to $z'$ with
weight at least $C_0^{-d(z,z')}$.
\end{step}

\begin{pf}
The set $\Omega^{(5)}$ is convex, up to a constant $K_0$: any
geodesic between two points in $\Omega^{(5)}$ is contained in its
neighborhood $B(\Omega^{(5)}, K_0)$. Let $K_1$ be such that any
generator can be written as the product of at most $K_1$ elements in
the support of $\mu$. Between any points $z,z'\in\Omega^{(5)}$,
there exists a path staying in $B(\Omega^{(5)}, K_0+K_1)$ of length
at most $K_1 d(z,z')$ whose transitions are all in a finite subset of
the support of $\mu$. The weight of this path is therefore at least
$\bar C_0^{-d(z,z')}$, for some $\bar C_0>0$.

For all $z\in B(\Omega^{(5)}, K_0+K_1)$, choose a point $\zeta_z$ in
$\Omega^{(5)}$ with $d(z,\zeta_z)\leq K_0+K_1$, and choose two paths
$\tau_z$ and $\tau'_z$, respectively, from $z$ to $\zeta_z$ and from
$\zeta_z$ to $z$, with uniformly bounded length, and weight uniformly
bounded from below. Let finally $\boD$ be the union of all the
(points visited by the) paths $\tau_z$ and $\tau'_z$.

This set satisfies the required properties. Indeed, fix $w$ and
$w'\in\boD$, we construct a path from $w$ to $w'$ with weight at
least $C_0^{-d(w,w')}$ as follows. First, let $z$ be such that $w\in
\tau_z \cup\tau'_z$, and $z'$ be such that $w'\in\tau_{z'}\cup
\tau'_{z'}$. We can go from $w$ to $\zeta_z$ in $\tau_z \cup\tau'_z
\subset\boD$ with weight bounded from below, then from $\zeta_z$ to
$\zeta_{z'}$ in $B(\Omega^{(5)}, K_0+K_1)\subset\boD$ with weight at
least $\bar C_0^{-d(\zeta_z,\zeta_{z'})} \geq C^{-1} \bar
C_0^{-d(w,w')}$, and then from $\zeta_{z'}$ to $w'$ in $\tau_{z'}\cup
\tau'_{z'}\subset\boD$ with weight bounded from below. The
concatenation of these three paths stays in $\boD$ and has weight at
least $C_0^{-d(w,w')}$ for some $C_0$, as desired.
\end{pf}

We deduce in particular of the properties of $\boD$ that, for all $z,
z'\in\boD$,
%
%e4.9 #&#
\begin{equation}
\label{eq:compare_z_zprime} G\bigl(z,z^*; \boD\bigr) \geq C_0^{-d(z',z^*)} G
\bigl(z,z'; \boD\bigr)
\end{equation}
since a path from $z$ to $z'$ can be extended in $\boD$ by a path
from $z'$ to $z^*$ with weight at least $C_0^{-d(z',z^*)}$.

Let $u$ be a function in $\boH$.

%st2 #&#
\begin{step}\label{st3}
For all $z\in\Omega^{(2)}$,
%
%e4.10 #&#
\begin{equation}
\label{eq:u_step_1} u(z) = \sum_{w\in\Omega^{(6)}-\boD} G(z,w; \boD)u(w) +
o_L(1) G\bigl(z,z^*;\boD\bigr)u\bigl(z^*\bigr).
\end{equation}
\end{step}

One interest of this formula is that the values of $u$ appearing on
the right-hand side are all positive since $w\in\Omega^{(6)}$.

\begin{pf*}{Proof of Step \ref{st3}}
We start from $z$ and follow the random walk given by $\mu$ until
time $n$, stopping it when one exits $\boD$. Since $u$ is harmonic on
$\boD$, the average value of $u$ at time $n$ coincides with $u(z)$,
that is,
%
%e4.11 #&#
\begin{equation}
\label{eq:u_somme_n} u(z) = \sum_{w\notin\boD}
G_{\leq n}(z,w;\boD) u (w) + \sum_{w\in\boD}
p^{n}(z,w; \boD) u(w),
\end{equation}
where $G_{\leq n}(z,w;\boD)$ is the sum of the weights of all paths
from $z$ to $w$ of length at most $n$ that stay in $\boD$ except
maybe at the last step, and $p^{n}(z,w; \boD)$ is the same quantity
but for paths of length exactly $n$. Note that $G_{\leq n}(z,w;\boD)$
converges to $G(z,w;\boD)$ when $n$ tends to infinity.

By assumption, the function $\vert u\vert$ is bounded by a linear
combination of functions $G(z, t_i)$. For each of those functions,
$\sum_{w\in\Gamma}p^n(z,w) G(w,t_i)$ tends to $0$ when $n$ tends to
infinity (since this is the sum of the weights of paths from $z$ to
$t_i$ of length at least $n$). It follows that the last sum
in~\eqref{eq:u_somme_n} converges to $0$ with $n$. If $u$ were
positive, one would readily deduce that $u(z) = \sum_{w\notin\boD}
G(z,w;\boD) u (w)$ by passing to the limit. However, since $u$ can be
negative on the complement of $\Omega^{(6)}$, we should be more
careful. To justify the limit and equation~\eqref{eq:u_step_1}, it
suffices to show that
\[
\sum_{w\notin\Omega^{(6)}} G(z,w; \boD) \bigl\vert u(w)\bigr\vert \leq
o_L(1)G\bigl(z,z^*;\boD\bigr)u\bigl(z^*\bigr).
\]
Denoting by $z'$ the last point in $\boD$ of a trajectory from $z$ to
$w$, this sum can be written as
\[
\adjustlimits\sum_{w\notin\Omega^{(6)}} \sum
_{z'\in\boD} G\bigl(z,z'; \boD\bigr) p
\bigl(z', w\bigr)\bigl\vert u(w)\bigr\vert.
\]
Bounding $\vert u(w)\vert$ by $u(z^*)D^{d(w,z^*)}$ and using
inequality~\eqref{eq:compare_z_zprime}, we get that this is at most
\[
\adjustlimits\sum_{w\notin\Omega^{(6)}} \sum
_{z'\in\boD} G\bigl(z,z^*; \boD\bigr) C_0^{d(z',z^*)}
p\bigl(z', w\bigr)D^{d(w,z^*)} u\bigl(z^*\bigr).
\]
The required factor $G(z,z^*;\boD) u(z^*)$ can be factorized out, one
should show that the remaining term is $o_L(1)$. The measure $\mu$
has superexponential tails. Hence, for any $K$, one has $p(z',w) \leq
K^{-d(z',w)}$ if $L$ is large enough (since the jump from $z'$ to $w$
has size at least $L/2$). Hence, it suffices to show that
\[
\adjustlimits\sum_{w\notin\Omega^{(6)}} \sum
_{z'\in\boD} C_0^{d(z',z^*)}D^{d(w,z^*)}
K^{-d(z',w)} = o_L(1).
\]
Let $z_0$ be the point on $\gamma$ at distance $3L/2$ of $y^*$. By
hyperbolicity, any geodesic segment from $w$ to $z'$ passes within
bounded distance of $z_0$, and its length is at least $L/2$. Hence,
\begin{eqnarray*}
d\bigl(z', z^*\bigr) &\leq &d\bigl(z',
z_0\bigr) + d\bigl(z_0, z^*\bigr) \leq d
\bigl(z', w\bigr) + 7L \leq d\bigl(z',w\bigr) + 14 d
\bigl(z',w\bigr)\\
& = & 15 d\bigl(z',w\bigr).
\end{eqnarray*}
Moreover, $d(w,z^*) \leq d(w,z')+d(z',z^*) \leq16 d(z',w)$. Writing
$n=d(z',w)$, we deduce that the above sum is bounded by
\[
\sum_{n=L/2}^\infty\Card\bigl\{
\bigl(z'\in\boD,w\notin\Omega^{(6)}\bigr) \st d
\bigl(z',w\bigr) =n\bigr\} \bigl(C_0^{15}
D^{16} K^{-1}\bigr)^n.
\]
If $z'$ and $w$ are at distance $n$, they both belong to the ball
$B(z_0, n+C)$. Hence, $\Card\{(z',w) \st d(z',w) =n\}$ grows at most
exponentially fast, let us say that it is bounded by $C_2^n$. If $K$
was chosen so that $C_2 C_0^{15} D^{16} K^{-1}<1$, the above series
is converging, and can be made arbitrarily small by increasing $L$,
as desired.
\end{pf*}

%st3 #&#
\begin{step}
Define a domain $\Lambda= \Omega^{(4)}-\Omega^{(3)}$. For all $z\in
\Omega^{(2)}$,
%
%e4.12 #&#
\begin{eqnarray}
\label{eq:u_Lambda} u(z)& =& \adjustlimits\sum_{w\in\Omega^{(6)}-\boD}\sum
_{w'\in
\Lambda} G\bigl(z,w';\boD\bigr) G
\bigl(w',w;\boD-\Lambda\bigr) u(w)
\nonumber
\\[-8pt]
\\[-8pt]
\nonumber
&&\hspace*{64pt}{}+ o_L(1) G\bigl(z,z^*;\boD\bigr)u\bigl(z^*\bigr) +
o_L(1) u(z).
\end{eqnarray}
\end{step}

\begin{pf}
We start from expression~\eqref{eq:u_step_1}.
By~\eqref{eq:decompose_G}, every term $G(z,w;\boD)$ can be decomposed
as
\[
G(z,w;\boD) = \sum_{w'\in\Lambda} G\bigl(z,w';
\boD\bigr) G\bigl(w',w;\boD -\Lambda\bigr) + G(z,w;\boD-\Lambda),
\]
by considering the last visit of a trajectory to $\Lambda$ if it
exists. We have to show that the contribution of the terms
$G(z,w;\boD-\Lambda)$ is negligible. Let us consider a trajectory
$\tau$ from $z$ to $w$ that does not visit $\Lambda$, it has to jump
past $\Lambda$. Say that the first jump happens from a point $w_i$ to
a point $w_{i+1}$.

If $w_{i+1}=w$, that is, the trajectory has jumped directly out of
$\boD$, then we can use the same argument as in Step $2$ since we are
considering a trajectory ending with a very big jump. The same
argument shows that the overall contribution of those trajectories
to~\eqref{eq:u_step_1} is bounded by $o_L(1) G(z,z^*;\boD)u(z^*)$.

Assume now that $w_{i+1}\neq w$, and in particular $w_{i+1}\in\boD$.
Let $\tilde z$ be the middle point of $\Lambda$, located on $\gamma$
at distance $7L/2$ of $x^*$. As in Section~\ref{subsec:free}, we
define a modified trajectory $m(\tau)$ by removing the big jump, and
replacing it with two almost geodesic trajectories in $\boD$ from
$w_i$ to $\tilde z$ and from $\tilde z$ to $w_{i+1}$. The
construction of $\boD$ in Step $1$ ensures that one can find such
trajectories, with positive weight. One also adds loops around
$\tilde z$, counting the lengths of the trajectories from $w_i$ to
$\tilde z$ and from $\tilde z$ to $w_{i+1}$, to make sure that the
map $\tau\mapsto m(\tau)$ is one-to-one. As in
Section~\ref{subsec:free}, one verifies that the weight of
$m(\tau)$ is larger than the weight of $\tau$ [the ratio
$\pi(m(\tau))/\pi(\tau)$ even tends to infinity when $L$ tends to
infinity]. Summing over all those trajectories, we get that their
weight is bounded by $o_L(1) G(z,w;\boD)$.

It follows that the term we have to estimate, coming
from~\eqref{eq:u_step_1}, is bounded by
\[
o_L(1)\sum_{w\in\Omega^{(6)}-\boD} G(z,w;\boD) u(w).
\]
Formula~\eqref{eq:u_step_1} shows that the sum is bounded by $u(z) +
o_L(1) G(z,z^*;\boD)u(z^*)$. This completes the proof.
\end{pf}

In expression~\eqref{eq:u_Lambda}, we can bound each factor $G(z,
w';\boD)$ using Ancona inequalities in the hourglass-shaped domain
$\boD$ if $z\in\Omega^{(1)}$. Indeed, a geodesic from $z\in
\Omega^{(1)}$ to $w'\in\Lambda$ passes within bounded distance of
$z^*$ by hyperbolicity, and $\boD$ is $H_0$-hourglass-shaped around
$z,z^*,w'$ if $L$ is large enough. It follows from
Lemma~\ref{lem:ancona_hourglass} that $G(z,w';\boD)=C_3^{\pm1}
G(z,z^*;\boD) G(z^*, w';\boD)$ for some constant $C_3$ (this notation
means that the ratio of those quantities belongs to $[C_3^{-1},
C_3]$). As all the relevant values $u(w)$ are positive, we obtain
\begin{eqnarray*}
u(z)& =  & C_3^{\pm1} G\bigl(z,z^*;\boD\bigr) \adjustlimits
\sum_{w\in\Omega
^{(6)}-\boD} \sum_{w'\in\Lambda} G
\bigl(z^*,w';\boD\bigr) G\bigl(w',w;\boD-\Lambda\bigr)
u(w)
\\
&&\hspace*{138pt}{} + o_L(1) G\bigl(z,z^*;\boD\bigr)u\bigl(z^*\bigr) +
o_L(1) u(z).
\end{eqnarray*}
Applying again \eqref{eq:u_Lambda}, but to the point
$z^* \in\Omega^{(2)}$, we get that the double sum on the right-hand
side of the first line is equal to $u(z^*) + o_L(1) u(z^*)$. This
yields
\[
u(z) = C_3^{\pm1} G\bigl(z,z^*;\boD\bigr)u\bigl(z^*\bigr) +
o_L(1) G\bigl(z,z^*;\boD\bigr)u\bigl(z^*\bigr) + o_L(1)
u(z).
\]
Let $L$ be large enough so that the $o_L(1)$ terms are bounded by
$\min(C_3^{-1}/2, 1/2)$. We obtain that the ratio between $u(z)$ and
$G(z,z^*;\boD)u(z^*)$ is bounded from above and from below. This
completes the proof of Lemma~\ref{lem:strong_ancona}.
\end{pf*}

\begin{pf*}{Proof of Theorem~\ref{thmm:strong_ancona}}
Let us fix a large enough constant $D$ (several conditions will
appear in the proof below), and let $L$ be given for this value of
$D$ by Lemma~\ref{lem:strong_ancona}.

Starting with $4$ points $x,x',y,y'$ as in the statement of strong
Ancona inequalities, we want to show that~\eqref{eq:strong_ancona}
holds. Let $\tilde x$ and $\tilde y$ denote the branching points of
the tree between $\{x,x'\}$ and $\{y,y'\}$. We can without loss of
generality assume that $d(\tilde x, \tilde y)$ is of the form $7nL$
for some large integer $n$. We have to show that the functions
$u_0(z) = G(z,y)/G(\tilde x,y)$ and $v_0(z) = G(z,y')/G(\tilde x,
y')$ are exponentially close (in terms of $n$) in a domain containing
$x$ and $x'$.

Let $\gamma$ be a geodesic of length $7nL$ from $\tilde y$ to $\tilde
x$; we chop it into $n$ pieces $\gamma_i$ of length $7L$ (the piece
$\gamma_1$ is closest to $\tilde y$). We will successively apply
Lemma~\ref{lem:strong_ancona} along those pieces. We will denote by
$y_i^*$ and $x^*_i$ the endpoints of $\gamma_i$, by $z_i^*$ the point
at distance $3L/2$ of $x_i^*$ on $\gamma_i$, and by $\Omega_i^{(j)}$
the corresponding domains defined in Lemma~\ref{lem:strong_ancona}
for $1\leq j \leq6$.

Harnack inequalities show that $u_0$ satisfies $\vert
u_0(z)/u_0(z')\vert
\leq C_0^{d(z,z')}$ for some constant $C_0$. In particular, if $D\geq
C_0$, the function $u_0$ satisfies all the assumptions of
Lemma~\ref{lem:strong_ancona} along the geodesic $\gamma_1$. We
obtain a domain $\boD_1$ (that does not depend on $u_0$) such that
%
%e4.13 #&#
\begin{equation}
\label{eq:u0_compare} C_1^{-1} \leq\frac{u_0(z)}{G(z,z_1^*;\boD_1)u_0(z_1^*)} \leq
C_1,
\end{equation}
for all $z \in\Omega^{(1)}_1$. Using~\eqref{eq:u0_compare} at the
point $\tilde x$ and dividing, we get on $\Omega^{(1)}_1$
\[
C_1^{-2} \leq\frac{u_0(z)}{G(z,z_1^*;\boD_1) u_0(\tilde x)/G(\tilde
x,z_1^*;\boD_1)} \leq C_1^2.
\]
Let
\[
\phi_1(z) = \frac{1}{2C_1^2}\frac{G(z,z_1^*;\boD_1)}{G(\tilde
x,z_1^*;\boD_1)}u_0(
\tilde x).
\]
We note that $\phi_1$ depends on $u_0$ only through its value at
$\tilde x$. By construction, we have on $\Omega_1^{(1)}$
%
%e4.14 #&#
\begin{equation}
\label{eq:compare_phi1} \phi_1 \leq u_0/2 \leq
C_1^4 \phi_1.
\end{equation}
In particular, the function $u_1 = u_0-\phi_1$ is positive on
$\Omega_1^{(1)}$. It is also harmonic there. We will show that $u_1$
satisfies the assumptions of Lemma~\ref{lem:strong_ancona} with
respect to the geodesic segment $\gamma_2$. Since assumption~(4) is
trivial, we only have to prove the growth control~(2).

Let $z\in\Gamma$, we have to show that $\vert u_1(z)\vert \leq D^{d(z,
z_2^*)} u_1(z_2^*)$. We start with the case $z\in
\Omega_1^{(1)}-\{z_2^*\}$ (the case $z=z_2^*$ is trivial). By
construction, $u_1(z)\geq0$. Using (twice)~\eqref{eq:u0_compare},
and thanks to Harnack inequality, we get
\begin{eqnarray*}
\bigl\vert u_1(z)\bigr\vert & \leq &u_0(z) \leq C_1 G
\bigl(z,z_1^*;\boD_1\bigr)u_0
\bigl(z_1^*\bigr) \leq C_1 C_0^{d(z,z_2^*)}
G\bigl(z_2^*,z_1^*;\boD_1\bigr)
u_0\bigl(z_1^*\bigr)
\\
& \leq& C_1^2 C_0^{d(z,z_2^*)}
u_0\bigl(z_2^*\bigr) \leq2 C_1^2
C_0^{d(z,z_2^*)} u_1\bigl(z_2^*\bigr).
\end{eqnarray*}
If $D$ is large enough so that $2 C_1^2 C_0\leq D$, we obtain
$\vert u_1(z)\vert \leq D^{d(z,z_2^*)} u_1(z_2^*)$ for $z\in
\Omega_1^{(1)}-\{z_2^*\}$, as desired. Assume now that $z\notin
\Omega_1^{(1)}$. Thanks to Harnack inequalities,
\[
G\bigl(z,z_1^*;\boD_1\bigr) \leq G\bigl(z,z_1^*
\bigr)\leq C_0^{d(z,z_1^*)}G\bigl(z_1^*,
z_1^*\bigr) \leq C_2 C_0^{d(z,z_1^*)} G
\bigl(z_1^*, z_1^*;\boD_1\bigr)
\]
for some $C_2>0$. Hence, $\phi_1(z) \leq C_2 C_0^{d(z,z_1^*)}
\phi_1(z_1^*)$. As $\phi_1(z_1^*) \leq u_0(z_1^*)$
by~\eqref{eq:compare_phi1}, we obtain
\[
\bigl\vert u_1(z)\bigr\vert \leq\bigl\vert u_0(z)\bigr\vert +
\phi_1(z) \leq D^{d(z, z_1^*)} u_0
\bigl(z_1^*\bigr) + C_2 C_0^{d(z,z_1^*)}
u_0\bigl(z_1^*\bigr).
\]
If $D$ is large enough, this is bounded by $2D^{d(z,z_1^*)}
u_0(z_1^*)$. Inequality~\eqref{eq:u0_compare} at $z=z_2^*$,
combined with Harnack inequality, yields $u_0(z_1^*) \leq C_1
C_0^{d(z_1^*, z_2^*)} u_0(z_2^*)$. Since $u_0 \leq2 u_1$ on
$\Omega^{(1)}_1$, we obtain
\[
\bigl\vert u_1(z)\bigr\vert \leq4C_1 D^{d(z,z_1^*)}
C_0^{d(z_1^*, z_2^*)} u_1\bigl(z_2^*\bigr).
\]
As $z\notin\Omega_1^{(1)}$, we have $d(z,z_2^*) \geq d(z,z_1^*) +
L$, whereas $d(z_1^*, z_2^*)=7L$. Hence,
\[
\bigl\vert u_1(z)\bigr\vert \leq4C_1 \bigl(C_0^7
D^{-1}\bigr)^L D^{d(z,z_2^*)} u_1
\bigl(z_2^*\bigr).
\]
If $D$ is large enough so that $4C_1 C_0^7 D^{-1} \leq1$, we finally
get $\vert u_1(z)\vert \leq   D^{d(z, z_2^*)} u_1(z_2^*)$. This is the
requested inequality.

We have shown that the function $u_1$ satisfies the assumptions of
Lemma~\ref{lem:strong_ancona} along the geodesic segment $\gamma_2$.
Hence, we may apply the same argument: we obtain a function $\phi_2$
with $\phi_2 \leq u_1/2\leq C_1^4 \phi_2$ on $\Omega_1^{(2)}$, only
depending on $u_1$ through the value of $u_1(\tilde x)$ [and,
therefore, only depending on $u_0(\tilde x)]$. Let $u_2 = u_1-\phi_2$,
it again satisfies the assumptions of the lemma along $\gamma_3$, and
we can continue the construction inductively.

In the end, we construct $n$ functions $\phi_1,\ldots,\phi_n$ such
that $u_0 = u_n + \phi_1+\cdots+\phi_n$, only depending on
$u_0(\tilde x)$. As $u_k=u_{k-1}-\phi_{k} \leq(1-C_1^{-4}/2)
u_{k-1}$, we have in particular $u_n \leq(1-\epsilon)^n u_0$ on
$\Omega^{(1)}_n$, for $\epsilon=C_1^{-4}/2>0$. The same construction
can be done starting from the function $v_0(z)= G(z,y')/G(\tilde x,
y')$. Since $v_0(\tilde x)=u_0(\tilde x)=1$, the functions $\phi_i$
that we get are the same. Hence, on $\Omega^{(1)}_n$,
\[
\bigl\vert u_0(z) - v_0(z)\bigr\vert =\bigl \vert
u_n(z)- v_n(z)\bigr\vert \leq(1-\epsilon)^n
\bigl(u_0(z) + v_0(z)\bigr).
\]
Therefore,
\[
\bigl\vert u_0(z)/v_0(z)-1\bigr\vert \leq(1-
\epsilon)^n \bigl(u_0(z)/v_0(z) + 1\bigr).
\]
This implies that $u_0(z)/v_0(z)$ is bounded by
$(1+(1-\epsilon)^n)/(1-(1-\epsilon)^n) \leq2/\epsilon$, yielding
\[
\bigl\vert u_0(z)/v_0(z)-1\bigr\vert \leq C(1-
\epsilon)^n.
\]
In other words,
\[
\biggl\vert\frac{G(z, y)/G(\tilde x, y)}{G(z,y')/G(\tilde x, y')} - 1\biggr\vert\leq C(1-\epsilon)^n.
\]
Using this inequality at $z=x$ and $z=x'$ (those points belong to
$\Omega_n^{(1)}$), we get the conclusion of the theorem.
\end{pf*}

%\bibliography{biblio}
%\bibliographystyle{imsart-number}

% imsref loaded by akundreckaite, 2014-06-18 12:50:07
%
% imsref loaded by akundreckaite, 2014-06-18 13:53:29

%
%\begin{appendix}
%\section{}
%\end{appendix}

% zodis "Acknowledgments" paliekamas pagal autoriu
%\section*{Acknowledgments}

%\begin{supplement}[id=suppA]
%\sname{Supplement A}
%\stitle{}
%\slink[doi]{10.1214/00-AOPXXXXSUPP} %[doi,text={...}] - jei reikia
%suskaldyti doi
%\sdatatype{.pdf}
%\sfilename{aopXXXX\_supp.pdf}
%\sdescription{}
%\end{supplement}

%\begin{thebibliography}{99}
%\bibitem[\protect\citeauthoryear{}{}]{r1}
%\bibitem{r1}
%\end{thebibliography}

\printaddresses
\end{document}